\pdfoutput=1
\documentclass[11pt]{amsart}

\usepackage[utf8]{inputenc}
\usepackage{amsmath,amssymb,amsthm,mathtools}
\usepackage{booktabs}
\usepackage{placeins}
\usepackage[margin=1.15in]{geometry}
\usepackage[hidelinks]{hyperref}

\newtheorem{theorem}{Theorem}[section]
\newtheorem{proposition}[theorem]{Proposition}
\newtheorem{lemma}[theorem]{Lemma}

\theoremstyle{definition}
\newtheorem{definition}[theorem]{Definition}
\newtheorem{hypothesis}[theorem]{Hypothesis}
\theoremstyle{remark}
\newtheorem{remark}[theorem]{Remark}

\numberwithin{equation}{section}

\newcommand{\PP}{\mathbb{P}}
\newcommand{\EE}{\mathbb{E}}
\newcommand{\fS}{\mathfrak{S}}
\newcommand{\lpf}{P^{-}}
\newcommand{\eps}{\varepsilon}

\begin{document}

\title{Multiplicity constants for rough numbers in prime gaps}

\author{Bahar U\u{g}urdo\u{g}an}
\address{Independent Researcher, Istanbul, T\"urkiye}
\thanks{The author received her Ph.D.\ in Electrical and Electronics Engineering from Yeditepe University, Istanbul, T\"urkiye.}

\subjclass[2020]{Primary 11N05; Secondary 11N25, 11N35}
\keywords{Prime gaps, rough numbers, reduced residues in short intervals,
singular series, Hardy--Littlewood conjecture}

\begin{abstract}
Let $p_n$ be the $n$th prime and $g_n=p_{n+1}-p_n$. Following Gafni and
Tao, an integer $m$ with $p_n<m<p_{n+1}$ is \emph{rough} for the gap if
its least prime factor is at least $g_n$; let $M_n$ be the number of such
integers. Gafni and Tao proved that the number of $n$ with
$X\le p_n\le 2X$ and $M_n=0$ is $O(X/\log^2X)$, and that it is
asymptotic to $cX/\log^2X$ under a form of the Dickson--Hardy--Littlewood
prime tuples conjecture. We study $N_r(X)$, the number of $n$ with
$X\le p_n\le 2X$ and $M_n<r$. For every $r\ge1$ we define a constant
$c_r$ by an endpoint-conditioned local sieve model, with $c_1=c$, and
we prove unconditionally that
\[
c_r=h_r^*+O_\eps\bigl(r^{1/2}(\log r)^{1+\eps}\bigr)\qquad(r\ge3),
\]
where $h_r^*$ is the least even $h\ge2$ with $h\prod_{p<h}(1-1/p)\ge r$.
Consequently $c_r=H_r\bigl(1+O(e^{-\kappa\sqrt{\log r}})\bigr)$ for an
absolute constant $\kappa>0$, where $H_r>e$ solves
$e^{-\gamma}H_r/\log H_r=r$ ($\gamma$ is Euler's constant), and
$c_r=e^\gamma r\bigl(\log r+\log\log r+\gamma+
\frac{\log\log r+\gamma}{\log r}
+O\bigl(\frac{(\log\log r)^2}{(\log r)^2}\bigr)\bigr)$.
The proof combines the Montgomery--Vaughan moment bounds for reduced
residues in short intervals with an exact removal of the endpoint
conditioning. For each fixed $r$, the argument of Gafni and Tao extends
to give $N_r(X)\ll_r X/\log^2X$ unconditionally, and
$N_r(X)\sim c_rX/\log^2X$ under their conjecture. Exact enumeration of the
local model gives $c\ge2.7413$.
\end{abstract}

\maketitle

\section{Introduction}\label{sec:intro}

Let $p_n$ denote the $n$th prime, let $g_n:=p_{n+1}-p_n$, and for an
integer $m>1$ let $\lpf(m)$ denote its least prime factor. Following
Gafni and Tao~\cite{GT}, an integer $m$ with $p_n<m<p_{n+1}$ is called
\emph{rough for the gap} $(p_n,p_{n+1})$ if
\[
\lpf(m)\ge g_n .
\]
Erd\H{o}s~\cite[p.~74]{Er79} expected that almost every prime gap contains
such an integer (see also~\cite{Bl}). Gafni and Tao
\cite[Theorem~1.1]{GT} proved that, if $N(X)$ denotes the number of $n$
with $p_n\in[X,2X]$ for which $(p_n,p_{n+1})$ contains no rough number,
then
\[
N(X)\ll\frac{X}{\log^2X},
\]
and that, assuming the Dickson--Hardy--Littlewood prime tuples conjecture
in the form \cite[Conjecture~4.1]{GT},
\[
N(X)\sim c\,\frac{X}{\log^2X}
\]
for an explicitly describable constant $c>0$, which they expect to lie
between $2.7$ and $2.8$.

In this paper we consider the number of rough integers in a prime gap.
Put
\[
M_n:=\#\{m:\ p_n<m<p_{n+1},\ \lpf(m)\ge g_n\},
\]
and, for integers $r\ge1$,
\begin{gather*}
N_r(X):=\#\{n:\ X\le p_n\le 2X,\ M_n<r\},\\
N_{h,r}(X):=\#\{n:\ X\le p_n\le 2X,\ g_n=h,\ M_n<r\}.
\end{gather*}
Thus $N_1(X)=N(X)$.

\subsection{The anchored local model}\label{subsec:model}

Throughout, unless stated otherwise, $p$ denotes a prime, $r$ a positive
integer, and $h\ge2$ an even integer. Put
\[
P_h:=\prod_{p<h}p,\qquad
V_h:=\prod_{p<h}\Bigl(1-\frac1p\Bigr)=\frac{\varphi(P_h)}{P_h}
\]
(so $P_2=1$ and $V_2=1$), and
\[
A_h:=\{\,b\bmod P_h:\ (b(b+h),P_h)=1\,\}.
\]
For $b\in\mathbb Z$ let
\[
\rho_h(b):=\#\{\,1\le i\le h-1:\ (b+i,P_h)=1\,\},
\]
which depends only on $b\bmod P_h$. We write $\PP_{A_h}$ for the uniform
probability measure on $A_h$ and put
\[
q_{h,r}:=\PP_{A_h}(\rho_h<r).
\]
For a finite set $\mathcal H\subset\mathbb Z$ let $\nu_p(\mathcal H)$ be
the number of residue classes modulo $p$ occupied by $\mathcal H$, and let
\[
\fS(\mathcal H):=\prod_p\Bigl(1-\frac{\nu_p(\mathcal H)}p\Bigr)
\Bigl(1-\frac1p\Bigr)^{-|\mathcal H|}
\]
be the Hardy--Littlewood singular series~\cite{HL}. For even $h$ we put
\[
\Sigma(h):=\fS(\{0,h\})=\fS_2\prod_{\substack{p\mid h\\ p>2}}\frac{p-1}{p-2},
\qquad
\fS_2:=2\prod_{p>2}\Bigl(1-\frac1{(p-1)^2}\Bigr)=1.3203236\ldots,
\]
so that $\fS_2$ is the twin prime constant (denoted $\fS$ in~\cite{GT}).

\begin{definition}\label{def:constants}
For even $h\ge2$ and $r\ge1$ let
\[
c_{h,r}:=\Sigma(h)\,q_{h,r},
\qquad
c_r:=\sum_{\substack{h\ge2\\ h\ \mathrm{even}}}c_{h,r}.
\]
\end{definition}

The terms are nonnegative, and $c_r<\infty$ by Lemma~\ref{lem:convergence}
below. Moreover $c_r>0$: since $P_4=6$, one has $A_4=\{1\bmod 6\}$ and
$\rho_4(1)=0$ (each of $2,3,4$ is divisible by $2$ or $3$), so $q_{4,r}=1$
and $c_r\ge c_{4,r}=\Sigma(4)=\fS_2$ for every $r\ge1$. Note also that
$\rho_2\equiv1$ (as $P_2=1$), so $c_{2,1}=0$ and $c_{2,r}=\fS_2$ for
$r\ge2$; this reflects the fact that $p_n+1$ is rough for every twin prime
gap.

The model has the following exact prime-gap interpretation.

\begin{lemma}\label{lem:interpretation}
If $g_n=h$ and $p_n>h$, then $p_n\bmod P_h\in A_h$ and
$M_n=\rho_h(p_n)$.
\end{lemma}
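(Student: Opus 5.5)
The plan is to unwind the definitions directly; both claims are elementary. Suppose $g_n=h$ and $p_n>h$, so $p_n$ and $p_{n+1}=p_n+h$ are primes exceeding $h$. For the first claim, take any prime $p<h$. Then $p<h<p_n<p_{n+1}$, so $p$ divides neither prime. Hence $(p_n(p_n+h),P_h)=1$, which says exactly that $p_n\bmod P_h\in A_h$.

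For the second claim, the integers $m$ with $p_n<m<p_{n+1}$ are precisely $m=p_n+i$ with $1\le i\le h-1$. By definition $M_n$ counts those $i$ with $\lpf(p_n+i)\ge g_n=h$. Meanwhile $\rho_h(p_n)$ counts those $i$ with $(p_n+i,P_h)=1$, i.e.\ $p_n+i$ has no prime factor $p<h$. It therefore suffices to check, for each $1\le i\le h-1$, the equivalence
\[
\lpf(p_n+i)\ge h\iff (p_n+i,P_h)=1 .
\]
Each such $m=p_n+i$ satisfies $m>p_n>h\ge 2$, so $m>1$ and $\lpf(m)$ is defined. Now $\lpf(m)\ge h$ holds exactly when no prime $p<h$ divides $m$, which is the statement $(m,P_h)=1$. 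Summing over $i$ gives $M_n=\rho_h(p_n)$. Note also that $\rho_h$ depends only on $p_n\bmod P_h$, consistent with the first claim.

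There is essentially no obstacle here; the only point needing care is the role of the hypothesis $p_n>h$. It guarantees that the endpoint primes are not themselves among the primes $p<h$, which would break membership in $A_h$. For instance, $p_n=3$, $h=4$ gives $3\mid P_4$. The hypothesis also ensures that every interior integer exceeds $1$, so its least prime factor is defined. I would remark on this after the proof, and also check that the edge case $h=2$ (with $P_2=1$) is covered trivially.
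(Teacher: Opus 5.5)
Your proof is correct and is essentially the paper's argument: the endpoint primes exceed $h$, so they are coprime to $P_h$, and for each $1\le i\le h-1$ the condition $\lpf(p_n+i)\ge h$ is equivalent to $(p_n+i,P_h)=1$. As a minor aside, your illustrative example $p_n=3$, $h=4$ is not an actual prime gap: Bertrand's postulate gives $g_n<p_n$ for every $n$, so the hypothesis $p_n>h$ always holds automatically.
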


\begin{proof}
The primes $p_n$ and $p_{n+1}=p_n+h$ exceed $h$, hence are coprime to
$P_h$. For $1\le i\le h-1$ one has $\lpf(p_n+i)\ge h$ if and only if no
prime $p<h$ divides $p_n+i$, that is, if and only if $(p_n+i,P_h)=1$.
\end{proof}

\begin{remark}[Comparison with~\cite{GT}]\label{rem:GT}
Let $h\ge4$ be even. By the Chinese remainder theorem,
\[
|A_h|=\prod_{p<h}\bigl(p-\nu_p(\{0,h\})\bigr)
=\prod_{\substack{2<p<h\\ p\nmid h}}(p-2)\prod_{\substack{2<p<h\\ p\mid h}}(p-1),
\]
and every odd prime divisor of $h$ is smaller than $h$. Comparing with
the formula for $\Sigma(h)$ we obtain
\begin{equation}\label{eq:Sigma-over-A}
\frac{\Sigma(h)}{|A_h|}=\frac{\fS_2}{\prod_{2<p<h}(p-2)},
\qquad\text{hence}\qquad
c_{h,1}=\frac{\fS_2\,|\Omega_h|}{\prod_{2<p<h}(p-2)},
\end{equation}
where $\Omega_h:=\{b\in A_h:\ \rho_h(b)=0\}$. This is exactly the
constant $c_h$ of \cite[Section~4]{GT}, and $c_{2,1}=0$ corresponds to the
vanishing of the gap-$2$ term there. Consequently $c_1=c$. More
generally, $c_{h,r}$ is the constant predicted for $N_{h,r}(X)$; see
Proposition~\ref{prop:fixed-h}.
\end{remark}

\subsection{Main results}\label{subsec:results}

For $r\ge3$ let
\[
h_r^*:=\min\{\,h\ge2\ \text{even}:\ hV_h\ge r\,\}.
\]
Since $2V_2=2<3$, the minimum is attained at some $h\ge4$ (in fact
$h_3^*=16$). For $r\ge2$ let $H_r>e$ denote the unique solution of
\[
e^{-\gamma}\,\frac{H_r}{\log H_r}=r ;
\]
it exists and is unique because $x\mapsto x/\log x$ increases from $e$ to
$\infty$ on $(e,\infty)$ and $e^{1-\gamma}<2$. Here $\gamma$ is Euler's
constant.

\begin{theorem}\label{thm:main}
\leavevmode
\begin{enumerate}
\item[(a)] For every fixed $\eps>0$ and all integers $r\ge3$,
\begin{equation}\label{eq:main-a}
c_r=h_r^*+O_\eps\bigl(r^{1/2}(\log r)^{1+\eps}\bigr).
\end{equation}
\item[(b)] There is an absolute constant $\kappa>0$ such that, for all
integers $r\ge3$,
\begin{equation}\label{eq:main-b}
c_r=H_r\bigl(1+O(e^{-\kappa\sqrt{\log r}})\bigr).
\end{equation}
\item[(c)] As $r\to\infty$,
\begin{equation}\label{eq:main-c}
c_r=e^\gamma r\Bigl(\log r+\log\log r+\gamma
+\frac{\log\log r+\gamma}{\log r}
+O\Bigl(\frac{(\log\log r)^2}{(\log r)^2}\Bigr)\Bigr).
\end{equation}
In particular $c_r\sim e^\gamma r\log r$.
\end{enumerate}
\end{theorem}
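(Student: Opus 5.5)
Part (a) carries the weight; (b) and (c) then follow from it by prime number theorem asymptotics. For (a), the idea is that $c_{h,r}=\Sigma(h)q_{h,r}$ behaves like a step function in $h$. It is close to $\Sigma(h)$ when $hV_h$ is well below $r$, and negligible when $hV_h$ is well above $r$. Since $\Sigma(h)$ has mean value $1$ over even $h$ when weighted per even integer (i.e. $\sum_{h\le x,\,2\mid h}\Sigma(h)=x+O(\log x)$, a classical computation), summing gives $c_r\approx h_r^*$.

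\emph{Step 1: remove the endpoint conditioning.} The first task is to relate $\PP_{A_h}$ to the uniform measure on reduced residues. Under uniform $b\bmod P_h$, the count $\rho_h(b)+\mathbf 1_{(b,P_h)=1}+\mathbf 1_{(b+h,P_h)=1}$ is the number of reduced residues in a window of length $h+1$. Conditioning on $b,b+h$ both coprime reweights by exactly the two-point density $|A_h|/P_h$. I would write
\[
\Sigma(h)\,q_{h,r}=\frac{\Sigma(h)}{|A_h|}\sum_{b\in A_h}\mathbf 1_{\rho_h(b)<r},
\]
and then use \eqref{eq:Sigma-over-A} together with the Hardy–Littlewood-type identity averaging $\fS(\{0,h\})$. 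The aim is the identity
\[
\sum_{h}\Sigma(h)\,q_{h,r}=\sum_h \EE\bigl[\text{weight}\cdot\mathbf 1_{\rho_h<r}\bigr]
\]
in a form where the weight is a pair-correlation factor of mean one. The key point to verify is that the anchored probability differs from the unanchored probability $\PP(\#\{\text{reduced residues in }(b,b+h)\}<r)$ only by factors of size $1+O(\text{small})$ at the relevant deviations.

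\emph{Step 2: concentration via Montgomery–Vaughan.} Montgomery–Vaughan give moment bounds
\[
\sum_{b\bmod P}\bigl(N(b,H)-HV\bigr)^{2k}\ll_k P\,(HV)^{k}
\]
for the number $N(b,H)$ of reduced residues in $(b,b+H]$, uniformly for $H\le P$ (with $P_h$ in place of $P$). Transferred to $A_h$ via Step 1, these give
\[
q_{h,r}\ll_k \Bigl(\frac{\sqrt{hV_h}}{hV_h-r}\Bigr)^{2k}\quad (hV_h>r)
\]
and, symmetrically,
\[
1-q_{h,r}\ll_k \Bigl(\frac{\sqrt{r}}{r-hV_h}\Bigr)^{2k}\quad (hV_h<r).
\]
Transferring the moments to $A_h$ loses at most a factor $P_h/|A_h|\ll(\log h)^2$, which is harmless. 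Since $hV_h$ increases roughly like $e^{-\gamma}h/\log h$, the window where $|hV_h-r|\le r^{1/2}(\log r)^{\eps}$ corresponds to $h$ in an interval of length $O(r^{1/2}(\log r)^{1+\eps})$ around $h_r^*$. Outside this window, taking $k$ large in terms of $\eps$ makes the tails summable. In the far range $h\gg r\log r$, one also needs $\sum_h\Sigma(h)q_{h,r}$ to converge, and Lemma~\ref{lem:convergence}'s method should supply this with the moment bounds.

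\emph{Step 3: assemble.} Write
\[
c_r=\sum_{h<h_r^*}\Sigma(h)-\sum_{h<h_r^*}\Sigma(h)(1-q_{h,r})+\sum_{h\ge h_r^*}\Sigma(h)q_{h,r}.
\]
The first sum is $h_r^*+O(\log r)$, since summing over even $h$ with mean $\Sigma$ equal to $2$ per even step gives length $h_r^*$. The window contributes $O(r^{1/2}(\log r)^{1+\eps})$ because $\Sigma(h)\ll\log\log h$ on average $O(1)$. The tails are bounded by Step 2. This yields (a). The main obstacle is Step 2 in the extreme range $h$ much larger than $r$, where $h$ is comparable to $P_h$-scale issues and the moment bound must still hold uniformly; I would first try the Montgomery–Vaughan estimate, which is uniform in $H$.

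For (b), Mertens' theorem with the de la Vallée Poussin error gives
\[
V_h=\frac{e^{-\gamma}}{\log h}\bigl(1+O(e^{-\kappa\sqrt{\log h}})\bigr),
\]
so $h_r^*=H_r\bigl(1+O(e^{-\kappa\sqrt{\log r}})\bigr)$, and the error term in (a) is $H_r\cdot O(r^{-1/2+o(1)})$, which is absorbed. For (c), one iterates $H=e^\gamma r\log H$ to obtain the stated expansion.
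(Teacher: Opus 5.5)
Your plan is essentially the paper's proof. The paper also uses only the crude transfer $\Sigma(h)\PP_{A_h}(E)\le\Sigma(h)\PP(E)/\PP(A_h)\le V_h^{-2}\PP(E)$, so the sharper anchored-versus-unanchored comparison you aim for in Step~1 is unnecessary. The rest matches too: Montgomery--Vaughan moments with Markov's inequality on ranges of $h$ around a window of width $r^{1/2}(\log r)^{1+\eps}$ at $h_r^*$, the mean value of $\Sigma(h)$, and Mertens plus a fixed-point expansion for (b) and (c).

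The one step to make precise is locating that window. It needs the exact monotonicity and increments $\tfrac45V_h\le(h+2)V_{h+2}-hV_h\le2V_h$ for even $h\ge4$. The global approximation $hV_h\approx e^{-\gamma}h/\log h$ is not enough, because its error $O(re^{-\kappa\sqrt{\log r}})$ is far larger than $r^{1/2}$.
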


\begin{theorem}\label{thm:gaps}
Let $r\ge1$ be fixed.
\begin{enumerate}
\item[(a)] For $X\ge3$ one has $N_r(X)\ll_r X/\log^2X$.
\item[(b)] Assume Hypothesis~\ref{hyp:DHL} below (which is the form of
\cite[Conjecture~4.1]{GT} used in \cite[Section~4]{GT}). Then
\[
N_r(X)\sim c_r\,\frac{X}{\log^2X}\qquad(X\to\infty).
\]
\end{enumerate}
\end{theorem}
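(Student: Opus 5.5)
The plan is to decompose by gap length,
\[
N_r(X)=\sum_{h\ \mathrm{even}}N_{h,r}(X),
\]
and to treat bounded $h$ and large $h$ separately, exactly as Gafni and Tao do for $r=1$. By Lemma~\ref{lem:interpretation}, for $p_n>h$ the condition $g_n=h$, $M_n<r$ means three things: $p_n$ and $p_n+h$ are prime, $p_n\bmod P_h$ lies in $\Omega_{h,r}:=\{b\in A_h:\rho_h(b)<r\}$, and no prime lies strictly between $p_n$ and $p_n+h$. The only change from the argument of~\cite{GT} is that $\Omega_h$ is replaced by $\Omega_{h,r}$. So the work is to check that every estimate there used only the smallness of $|\Omega_h|/|A_h|$, and that this smallness persists for $q_{h,r}$ when $r$ is fixed.

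For part (a), drop the condition that there are no primes in between, and fix a small $\delta>0$. In the range $h\le \delta\log X$ (so $P_h\le X^{\delta}$), I would count $m\in[X,2X]$ with $m\bmod P_h\in\Omega_{h,r}$ and with $m$, $m+h$ free of prime factors in $[h,X^{\delta}]$. An upper bound sieve (Selberg) on this arithmetic progression gives
\[
N_{h,r}(X)\ll \frac{|\Omega_{h,r}|}{P_h}\,X\prod_{h\le p<X^{\delta}}\Bigl(1-\frac{\nu_p(\{0,h\})}{p}\Bigr)\ll \Sigma(h)\,q_{h,r}\,\frac{X}{\log^2X}=c_{h,r}\frac{X}{\log^2X},
\]
uniformly in $h$, using \eqref{eq:Sigma-over-A}-type bookkeeping to recover $\Sigma(h)$. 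Summing over $h$ and using Lemma~\ref{lem:convergence} bounds this range by $\ll c_r X/\log^2X$.

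For $h>\delta\log X$ I would follow the large-gap part of~\cite{GT}. A gap of length $h$ with $M_n<r$ gives an interval of length $h-1$ containing fewer than $r$ integers coprime to $P_y$, for any $y\le h$. Take $y$ a small power of $\log X$. The expected count is then $\asymp h/\log y\to\infty$, so this is a large deviation for reduced residues modulo $P_y$ in short intervals. The Montgomery--Vaughan moment bounds (high moments, with $r$ fixed) show that the number of starting points $m\le 2X$ of such intervals is $\ll_r X\,e^{-c h/\log y}$. Summing over $h>\delta\log X$ should give $o(X/\log^2X)$.

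I expect this large-$h$ range to be the main obstacle. One must check that the bound of~\cite{GT} for $\rho=0$ really degrades only by a factor depending on $r$ when $\rho=0$ is relaxed to $\rho<r$. If the moment method is too weak near $h\approx\log X$, I would first try adding the primality of both endpoints back into the sieve in an intermediate range $\delta\log X<h\le(\log X)^{2}$, as in~\cite{GT}.

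For part (b), I would first show that for each fixed even $h$ Hypothesis~\ref{hyp:DHL} gives $N_{h,r}(X)\sim c_{h,r}X/\log^2X$ (Proposition~\ref{prop:fixed-h}). The steps are: split over the classes $b\in\Omega_{h,r}$; impose primality of $p$ and $p+h$; and remove the condition of no intermediate primes by inclusion--exclusion over the finitely many tuples $\{0,h\}\cup S$ with $S\subset(0,h)$. This produces the singular series identity that yields $\Sigma(h)q_{h,r}$, with Remark~\ref{rem:GT} as the $r=1$ case. The uniform bounds from part (a) then show that $\sum_{h>H}N_{h,r}(X)\le\eta(H)X/\log^2X$ with $\eta(H)\to0$ as $H\to\infty$. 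Since the series defining $c_r$ also has tails tending to $0$, letting $X\to\infty$ and then $H\to\infty$ gives the asymptotic.
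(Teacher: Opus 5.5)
Your long-gap range, $h>\delta\log X$, does not work, and both part (a) and the tail estimate for $\sum_{h>H}N_{h,r}(X)$ in part (b) depend on it. The inclusion you use is reversed. $M_n$ counts the $m$ in the gap with $(m,P_h)=1$, and for $y\le h$ every such $m$ is also coprime to $P_y$. Hence $\#\{p_n<m<p_{n+1}:(m,P_y)=1\}\ge M_n$, and $M_n<r$ gives no upper bound on the $P_y$-count. To pass from $M_n<r$ to a sieve count you need a sifting level $y\ge h$, since then $P_y$-coprime integers in the gap are rough. That is incompatible with taking $y$ a small power of $\log X$ when $h>\delta\log X$.

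With $y\ge h$, however, the modulus $P_y\approx e^{y}$ exceeds $X$ once $h$ is somewhat larger than $\log X$. The Montgomery--Vaughan bound averages over a complete residue system modulo $P_y$, so it then says nothing about an average over $m\in[X,2X]$. Gaps can be as long as a power of $X$, so this range cannot be avoided, and your fallback (reinserting endpoint primality for $h\le(\log X)^2$) does not reach it. Also, for fixed $K$ the bound~\eqref{eq:MV} gives only polynomial decay $(h/\log y)^{-K/2}$, not $e^{-ch/\log y}$; the factor $(q/\varphi(q))^{2^K}$ prevents letting $K$ grow.

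The missing device is the one in Lemma~\ref{lem:long}.
\begin{itemize}
\item Fix a window length $L=\lfloor(\log X)^\alpha\rfloor$ and a level $z=\exp((\log X)^\beta)$ with $\beta<\alpha/2$. Then $z$ exceeds every gap being treated while $L$ is shorter than it.
\item Bound the measure of $\{x\in[X,2X]:T(x)<r\}$ using the moment estimate \cite[Lemma~3.1]{GT} for $z$-rough integers in windows of length $L$. This estimate holds even though $P_z$ is far larger than $X$.
\item If $(\log X)^\eta\le g_n<z$, then every $x\in[p_n,p_{n+1}-L)$ has $T(x)\le M_n<r$. So each such gap contributes measure at least $L$ to the exceptional set.
\item Count the gaps with $g_n\ge z$ trivially, by $X/z$.
\end{itemize}
This gives $\ll_{r,\eta,B}X/(\log X)^B$ for all gaps of length at least $(\log X)^\eta$, which overlaps your short range.

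Your short range is sound, and is in fact sharper than the paper's Lemma~\ref{lem:short}. A dimension-two Selberg sieve on each progression $m\equiv b\pmod{P_h}$, $b\in\Omega_{h,r}$, sifting by $h\le p<X^\delta$, gives $N_{h,r}(X)\ll_\delta c_{h,r}X/\log^2X$ uniformly for $h\le\delta\log X$. This is summable by Lemma~\ref{lem:convergence} and directly handles that part of the tail in (b).

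One correction in the fixed-$h$ step: Hypothesis~\ref{hyp:DHL} covers only pairs. So the inclusion--exclusion terms with $S\neq\emptyset$ must be bounded above by a sieve for prime triples, $O_h(X/\log^3X)$, rather than evaluated asymptotically.
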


For $r=1$, Theorem~\ref{thm:gaps} is \cite[Theorem~1.1]{GT}, and the
argument below is a direct extension of the one in~\cite{GT}; the
modifications needed for $r\ge2$ are the threshold $R'\ge2r$ in the
moment arguments and the removal of prime triples in the conditional
part (in Lemma~\ref{lem:short} we also use the local bound~\eqref{eq:MV}
in place of the sixth-moment estimate of \cite[proof of Lemma~4.1]{GT}).
Our main concern is the quantitative dependence of the local constants
on~$r$ in Theorem~\ref{thm:main}.

\begin{remark}\label{rem:scope}
Theorem~\ref{thm:main} concerns the deterministic constants $c_r$ as
$r\to\infty$, whereas Theorem~\ref{thm:gaps} concerns $N_r(X)$ for each
fixed~$r$. No asymptotic formula for $N_{r(X)}(X)$ with a growing
threshold $r=r(X)$ is claimed here; this would require prime-pair input
that is uniform in the modulus $P_h$ and in~$h$.
\end{remark}

\begin{remark}\label{rem:catral}
A different generalisation is studied in~\cite{CS}: there the
roughness condition is replaced by $\lpf(m)\ge k\,g_n$, while the number
of rough integers permitted in a gap is not varied.
\end{remark}

\begin{remark}\label{rem:numerics-intro}
Section~\ref{sec:numerics} records numerical values of the local
constants. In particular, exact enumeration gives
$c=c_1\ge\sum_{h\le36}c_{h,1}=2.74136\ldots$, and for every
$3\le r\le 50$ the truncated Monte Carlo estimates
$\widehat c_r^{(\le900)}$ of $\sum_{h\le900}c_{h,r}$ given there satisfy
${|\widehat c_r^{(\le900)}-h_r^*|<3}$; this suggests that the error term
in~\eqref{eq:main-a} is far from optimal.
\end{remark}

\subsection{Notation}
The symbols $\ll$, $\gg$, $\asymp$ and $O(\cdot)$ have their usual meaning;
dependence of implied constants on parameters is indicated by
subscripts. The letters $\kappa,\kappa_1,\kappa_2,\dots$ denote positive
absolute constants (they are unrelated to the constants $c_r$). For even
$h$ we write $\PP$ and $\EE$ for probability and expectation with respect
to the uniform measure on $\mathbb Z/P_h\mathbb Z$, and regard $A_h$ as an
event; thus $\PP_{A_h}=\PP(\,\cdot\mid A_h)$.

\section{The quantitative transition}\label{sec:transition}

\subsection{Arithmetic inputs}

We use two classical estimates. The first is the average of the pair
singular series:
\begin{equation}\label{eq:singular-average}
\sum_{\substack{2\le h\le Y\\ h\ \mathrm{even}}}\Sigma(h)=Y+O(\log Y)
\qquad(Y\ge2).
\end{equation}
Indeed, for $h=2m$ one has $\Sigma(2m)=\fS_2\prod_{p>2,\,p\mid m}
\frac{p-1}{p-2}$, and \eqref{eq:singular-average} follows from the
estimate $\sum_{m\le w}\prod_{p>2,\,p\mid m}\frac{p-1}{p-2}
=2w/\fS_2+O(\log w)$ $(w>2)$ of Montgomery~\cite{Mo70}, in the form
recorded in \cite[(2.12)]{GT}, applied with $w=Y/2$.

The second is Mertens' product formula with the classical error term.

\begin{lemma}\label{lem:mertens}
There is an absolute constant $\kappa_1>0$ such that, for even $h\ge4$,
\[
V_h=\frac{e^{-\gamma}}{\log h}\Bigl(1+O\bigl(e^{-\kappa_1\sqrt{\log h}}\bigr)\Bigr).
\]
In particular $V_h\asymp1/\log h$, and $V_x/V_y=1+O(1/\log x)$ for even
$4\le x\le y\le 3x$.
\end{lemma}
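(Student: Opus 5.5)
The plan is to deduce the lemma from Mertens' theorem in its sharp form, which is a direct consequence of the prime number theorem with the de la Vallée Poussin error term. Write
\[
\log V_h=\sum_{p<h}\log\Bigl(1-\frac1p\Bigr)=-\sum_{p<h}\frac1p-\sum_{p<h}\sum_{k\ge2}\frac1{kp^k}.
\]
The double sum converges absolutely. Its tail over $p\ge h$ is $O(1/h)$, so it equals $C_0+O(1/h)$ with $C_0:=\sum_p\sum_{k\ge2}1/(kp^k)$.

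For the first sum I will use $\theta(x)=x+O(xe^{-\kappa\sqrt{\log x}})$. Partial summation then gives
\[
\sum_{p\le x}\frac1p=\log\log x+B+O\bigl(e^{-\kappa'\sqrt{\log x}}\bigr).
\]
The main term comes from $\int_2^x dt/(t\log t)$. The error integral $\int_x^\infty e^{-\kappa\sqrt{\log t}}\,dt/(t\log t)$ is $O(e^{-\kappa'\sqrt{\log x}})$ for any $\kappa'<\kappa$, as the substitution $u=\sqrt{\log t}$ shows. The identity $B+C_0=\gamma$, the classical Mertens constant relation, is what converts the constant into $e^{-\gamma}$. I will quote it as standard: it follows, for instance, from comparing with $\zeta(s)$ near $s=1$, or simply from the classical Mertens product theorem $\prod_{p\le x}(1-1/p)\sim e^{-\gamma}/\log x$, which pins down the constant.

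Exponentiating gives
\[
V_h=\frac{e^{-\gamma}}{\log h}\exp\bigl(O(e^{-\kappa'\sqrt{\log h}})+O(1/h)\bigr).
\]
Absorbing $1/h$ into the exponential error term and using $e^{O(\delta)}=1+O(\delta)$ for bounded $\delta$ yields the main statement with $\kappa_1=\kappa'$. Note that the product over $p<h$ versus $p\le h$ differs only when $h$ is prime, which never happens here since $h\ge4$ is even; in any case such a difference would only be a factor $1+O(1/h)$.

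The consequences are then immediate. First, $V_h\asymp1/\log h$ holds for $h\ge4$: for large $h$ the error factor is in $[1/2,2]$, and for the finitely many small $h$ the quantity $V_h\log h$ is a positive constant. Second, for even $4\le x\le y\le3x$,
\[
\frac{V_x}{V_y}=\prod_{x\le p<y}\Bigl(1-\frac1p\Bigr)^{-1}=\exp\Bigl(\sum_{x\le p<y}\frac1p+O(1/x)\Bigr).
\]
By Chebyshev or Brun–Titchmarsh, $\sum_{x\le p<3x}1/p\ll (x/\log x)\cdot(1/x)=1/\log x$. Hence the ratio is $1+O(1/\log x)$. This avoids using the main term, which would only give the weaker $e^{-\kappa\sqrt{\log x}}$-type comparison plus $\log(\log y/\log x)=O(1/\log x)$; that route also works.

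The only genuine input, and thus the main obstacle, is the prime number theorem with classical error term together with the Mertens constant identity. Both are standard and will be cited rather than proved, so the proof reduces to the partial-summation bookkeeping above.
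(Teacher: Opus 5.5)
Your proof is correct and follows the paper's route: both rest on Mertens' product formula with the error term coming from the prime number theorem with the classical de la Vall\'ee Poussin remainder. The paper simply cites this (via Lemma~3.1 of Gafni--Tao), while you carry out the partial summation and identify the constant through $B+C_0=\gamma$. The only difference is in the ratio bound: you estimate $\sum_{x\le p<3x}1/p\ll 1/\log x$ directly by Chebyshev, whereas the paper deduces it from the main asymptotic together with $\log y/\log x=1+O(1/\log x)$, and both work.
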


\begin{proof}
Since $h$ is even and $h\ge4$, $h$ is not prime and
$V_h=\prod_{p\le h}(1-1/p)$. The estimate
$\prod_{p<z}(1-1/p)=e^{-\gamma}(\log z)^{-1}(1+O(\exp(-\kappa_1\sqrt{\log z})))$
follows from the prime number theorem with the classical error term; it
is stated in \cite[Lemma~3.1]{GT} (in the form
$R'=R(1+O(\exp(-\kappa_1\sqrt{\log z})))$), cf.\ \cite[(2.10)]{GT}. The last
assertion follows since $\log y/\log x=1+O(1/\log x)$ for $x\le y\le3x$.
\end{proof}

The key probabilistic input is the Montgomery--Vaughan bound for the
moments of reduced residues in short intervals.

\begin{lemma}\label{lem:MV}
For a modulus $q\ge1$ and an integer $L\ge1$ let
\[
m_K(q;L):=\sum_{a=1}^{q}\Bigl(\sum_{\substack{1\le m\le L\\(a+m,q)=1}}1
-L\frac{\varphi(q)}q\Bigr)^K .
\]
For every fixed positive integer $K$,
\begin{equation}\label{eq:MV}
\frac1q\,m_K(q;L)\ll_K\Bigl(L\frac{\varphi(q)}q\Bigr)^{K/2}
\Bigl(1+L^{-1/(7K)}\Bigl(\frac q{\varphi(q)}\Bigr)^{2^K+K/2}\Bigr).
\end{equation}
Consequently, if $K\ge2$ is a fixed even integer, $S_h(b):=\#\{0\le i\le h:
(b+i,P_h)=1\}$ and $\lambda_h:=(h+1)V_h$, then there is $h_0(K)$ such that
\begin{equation}\label{eq:moment-bound}
\EE|S_h-\lambda_h|^K\le C_K\,\lambda_h^{K/2}\qquad(h\ge h_0(K)\ \text{even}),
\end{equation}
where $C_K$ depends only on $K$.
\end{lemma}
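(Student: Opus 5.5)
The first estimate \eqref{eq:MV} is the Montgomery--Vaughan theorem on reduced residues in short intervals, and we take it as given. The plan is to deduce \eqref{eq:moment-bound} from it by choosing $q=P_h$ and $L=h+1$. Two small adjustments are needed to match the normalisation of $S_h$. First, $S_h(b)$ counts $0\le i\le h$, whereas $m_K$ counts $1\le m\le L$. Writing $a=b-1$, we get $S_h(b)=\#\{1\le m\le h+1:(a+m,P_h)=1\}$. As $b$ runs over $\mathbb Z/P_h\mathbb Z$, so does $a$. Second, $L\varphi(q)/q=(h+1)V_h=\lambda_h$. Hence, for even $K$,
\[
\EE|S_h-\lambda_h|^K=\frac1{P_h}\,m_K(P_h;h+1).
\]
Here evenness of $K$ is used so that the $K$th power equals the absolute $K$th power and the sum in $m_K$ is nonnegative.

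Applying \eqref{eq:MV} gives
\[
\EE|S_h-\lambda_h|^K\ll_K\lambda_h^{K/2}\Bigl(1+(h+1)^{-1/(7K)}V_h^{-(2^K+K/2)}\Bigr).
\]
It remains to show that the second term in the bracket is bounded, and in fact tends to $0$. By Lemma~\ref{lem:mertens}, $V_h^{-1}\asymp\log h$ for even $h\ge4$, so $V_h^{-(2^K+K/2)}\ll_K(\log h)^{2^K+K/2}$. Since $K$ is fixed, $(\log h)^{2^K+K/2}(h+1)^{-1/(7K)}\to0$ as $h\to\infty$. We therefore choose $h_0(K)$ so that this product is at most $1$ for all even $h\ge h_0(K)$. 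The bracket is then at most $2$, and we take $C_K$ to be twice the implied constant in \eqref{eq:MV} for this $K$.

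There is no real obstacle here. The only points needing care are the index shift between $\{0,\dots,h\}$ and $\{1,\dots,L\}$, the use of even $K$, and recording that the constant depends only on $K$, since Mertens' estimate is uniform in $h$. If one prefers to avoid the $O$-term in Lemma~\ref{lem:mertens}, the elementary bound $V_h^{-1}\ll\log h$ is all that is needed.
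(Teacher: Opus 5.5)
Your proposal is correct and follows the paper's argument essentially step for step: you set $q=P_h$, $L=h+1$ and $a=b-1$, use that $K$ is even to identify $\EE|S_h-\lambda_h|^K$ with $q^{-1}m_K(q;L)$, and use $V_h^{-1}\ll\log h$ to make the second factor in \eqref{eq:MV} $O_K(1)$ for $h\ge h_0(K)$. The only thing the paper adds is a short derivation of \eqref{eq:MV} itself, which you take as given: it starts from Montgomery--Vaughan's bound for the exponential sum $V_K(q;L)$ and converts it via the identity $m_K(q;L)=q(\varphi(q)/q)^K V_K(q;L)$.
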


\begin{proof}
By Lemma~2 of \cite{MV}, as recorded in \cite[(10), (11)]{MS},
$m_K(q;L)=q(\varphi(q)/q)^KV_K(q;L)$ for a certain exponential sum
$V_K(q;L)$, and Montgomery and Vaughan showed (see the display preceding
\cite[Theorem~1]{MS}) that
\[
V_K(q;L)\ll_K\Bigl(\frac{Lq}{\varphi(q)}\Bigr)^{K/2}
\Bigl(1+O_K\Bigl(L^{-1/(7K)}\Bigl(\frac q{\varphi(q)}\Bigr)^{2^K+K/2}\Bigr)\Bigr).
\]
Multiplying by $(\varphi(q)/q)^K$ gives~\eqref{eq:MV}.
For~\eqref{eq:moment-bound} take $q=P_h$ and $L=h+1$. Then
$L\varphi(q)/q=\lambda_h$ and $q/\varphi(q)=V_h^{-1}\ll\log h$ by
Lemma~\ref{lem:mertens}, so the second factor in~\eqref{eq:MV} is
$O_K(1)$ for $h\ge h_0(K)$. Finally, with $a=b-1$ the inner count in
$m_K(q;L)$ equals $S_h(b)$, and $K$ is even, so
$\EE|S_h-\lambda_h|^K=q^{-1}m_K(q;L)$.
\end{proof}

\subsection{Removing the endpoint conditioning}

\begin{lemma}\label{lem:endpoint}
Let $h\ge4$ be even. Then
\begin{equation}\label{eq:ratio}
\frac{\Sigma(h)}{\PP(A_h)}=V_h^{-2}\prod_{p>h}\Bigl(1-\frac1{(p-1)^2}\Bigr)
\le V_h^{-2},
\end{equation}
and on $A_h$ one has $S_h=\rho_h+2$. Consequently, for every $r\ge1$,
\begin{align}
\Sigma(h)\,q_{h,r}&\le V_h^{-2}\,\PP(S_h\le r+1),\label{eq:lower-tail}\\
\Sigma(h)\,(1-q_{h,r})&\le V_h^{-2}\,\PP(S_h\ge r+2).\label{eq:upper-tail}
\end{align}
\end{lemma}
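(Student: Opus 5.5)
The plan is a direct computation of both sides of \eqref{eq:ratio} prime by prime. Once that identity is in hand, \eqref{eq:lower-tail} and \eqref{eq:upper-tail} follow by writing $q_{h,r}$ and $1-q_{h,r}$ as conditional probabilities and dropping the conditioning.

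\emph{The ratio.} By the Chinese remainder theorem, as in Remark~\ref{rem:GT},
\[
\PP(A_h)=\frac{|A_h|}{P_h}=\prod_{p<h}\Bigl(1-\frac{\nu_p(\{0,h\})}{p}\Bigr).
\]
In $\Sigma(h)=\fS(\{0,h\})$ we split the Euler product into the ranges $p<h$ and $p>h$; the prime $p=h$ does not occur, since $h\ge4$ is even. For $p<h$ the factor is $(1-\nu_p/p)(1-1/p)^{-2}$. Dividing these factors by $\PP(A_h)$ leaves exactly $\prod_{p<h}(1-1/p)^{-2}=V_h^{-2}$.

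For $p>h$ we have $p\nmid h$, so $\nu_p=2$. The factor is then
\[
\Bigl(1-\frac2p\Bigr)\Bigl(1-\frac1p\Bigr)^{-2}=\frac{p(p-2)}{(p-1)^2}=1-\frac1{(p-1)^2}.
\]
This gives the identity in \eqref{eq:ratio}. The inequality $\le V_h^{-2}$ holds because each factor $1-1/(p-1)^2$ lies in $(0,1)$.

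\emph{The shift $S_h=\rho_h+2$.} The quantity $S_h(b)$ counts $i\in\{0,\dots,h\}$ with $(b+i,P_h)=1$, while $\rho_h(b)$ counts only $1\le i\le h-1$. On $A_h$ both endpoints $b$ and $b+h$ are coprime to $P_h$, which adds exactly $2$.

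\emph{The tail bounds.} We have $q_{h,r}=\PP(\rho_h<r\mid A_h)=\PP(\{\rho_h\le r-1\}\cap A_h)/\PP(A_h)$. On $A_h$ the event $\rho_h\le r-1$ coincides with $S_h\le r+1$, so
\[
q_{h,r}\le \frac{\PP(S_h\le r+1)}{\PP(A_h)}.
\]
Multiplying by $\Sigma(h)$ and using \eqref{eq:ratio} gives \eqref{eq:lower-tail}. In the same way, $1-q_{h,r}=\PP(\{\rho_h\ge r\}\cap A_h)/\PP(A_h)$, and on $A_h$ the event $\rho_h\ge r$ is $\{S_h\ge r+2\}$; this yields \eqref{eq:upper-tail}.

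\emph{Main obstacle.} There is no real obstacle; the only care needed is bookkeeping in the Euler product. One must check that $p=h$ never occurs, that every odd $p\mid h$ satisfies $p<h$ so that all such primes fall in the first range, and that $p=2$ (with $\nu_2=1$) is handled consistently in both products. All of this is automatic once the products are written over the same primes $p<h$.
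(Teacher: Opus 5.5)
Your proposal is correct and follows essentially the same route as the paper: the Chinese remainder theorem formula for $\PP(A_h)$, a factor-by-factor comparison with the Euler product of $\Sigma(h)$ (using $\nu_p(\{0,h\})=2$ for $p>h$), the endpoint shift $S_h=\rho_h+2$ on $A_h$, and then dropping the intersection with $A_h$ in the conditional probabilities. Your bookkeeping remarks (that $p=h$ never occurs because $h$ is even and at least $4$, and that every odd $p\mid h$ lies in the range $p<h$) are the same justification the paper gives.
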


\begin{proof}
By the Chinese remainder theorem,
$\PP(A_h)=\prod_{p<h}(1-\nu_p(\{0,h\})/p)$. Comparing with the Euler
product of $\Sigma(h)$, the factors with $p<h$ leave
$\prod_{p<h}(1-1/p)^{-2}=V_h^{-2}$. For $p\ge h$ we have $p\nmid h$ (as
$h$ is even, $h\ge4$, so $h$ is not prime), hence $\nu_p(\{0,h\})=2$ and
the corresponding factor is
$(1-2/p)(1-1/p)^{-2}=1-(p-1)^{-2}<1$; this proves~\eqref{eq:ratio}.
If $b\in A_h$ then $b$ and $b+h$ are coprime to $P_h$, so
$S_h(b)=\rho_h(b)+2$. Hence
$q_{h,r}=\PP(A_h\cap\{S_h\le r+1\})/\PP(A_h)\le\PP(S_h\le r+1)/\PP(A_h)$,
and~\eqref{eq:lower-tail} follows from~\eqref{eq:ratio};
\eqref{eq:upper-tail} is proved in the same way.
\end{proof}

\begin{lemma}\label{lem:convergence}
Fix $r\ge1$. There is $h_1(r)$ such that $c_{h,r}\ll(\log h)^5/h^3$ for
all even $h\ge h_1(r)$, with an absolute implied constant. In particular
$c_r<\infty$.
\end{lemma}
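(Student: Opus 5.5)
The plan is to combine the lower-tail bound \eqref{eq:lower-tail} of Lemma~\ref{lem:endpoint} with a high moment bound from Lemma~\ref{lem:MV}. Fix $r\ge1$ and let $h$ be even and large. By \eqref{eq:lower-tail},
\[
c_{h,r}=\Sigma(h)\,q_{h,r}\le V_h^{-2}\,\PP(S_h\le r+1).
\]
By Lemma~\ref{lem:mertens}, $\lambda_h=(h+1)V_h\asymp h/\log h$, so $\lambda_h\to\infty$. Choose $h_1(r)$ so large that $\lambda_h\ge 2(r+1)$ for $h\ge h_1(r)$. Then the event $S_h\le r+1$ forces $|S_h-\lambda_h|\ge\lambda_h/2$.

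Next apply Chebyshev's inequality with the fixed even exponent $K=6$ and use \eqref{eq:moment-bound}. For $h\ge\max(h_0(6),h_1(r))$ this gives
\[
\PP(S_h\le r+1)\le \frac{\EE|S_h-\lambda_h|^6}{(\lambda_h/2)^6}\le 2^6C_6\,\lambda_h^{-3}.
\]
Combining the two displays,
\[
c_{h,r}\ll V_h^{-2}\bigl(hV_h\bigr)^{-3}=h^{-3}V_h^{-5}\ll \frac{(\log h)^5}{h^3},
\]
where the last step uses $V_h^{-1}\ll\log h$ from Lemma~\ref{lem:mertens}. The implied constant is $2^6C_6$ times a power of the Mertens constant, so it is absolute; only the threshold $h_1(r)$ depends on $r$.

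Convergence of $c_r$ then follows at once. The finitely many terms with $h<h_1(r)$ are each at most $\Sigma(h)<\infty$, since $q_{h,r}\le1$. The tail is bounded by $\sum_h(\log h)^5h^{-3}<\infty$.

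The only point needing care is the choice of $K$. Chebyshev with exponent $K$ gives $V_h^{-2}\lambda_h^{-K/2}$. With $K=4$ this is only $\ll(\log h)^4/h^2$, which is still summable, but $K=6$ is needed to reach the stated $h^{-3}(\log h)^5$. Because $K=6$ is fixed, $C_6$ and $h_0(6)$ are absolute, which is what makes the implied constant absolute.
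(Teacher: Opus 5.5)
Your proposal is correct and follows essentially the same route as the paper. Both arguments apply \eqref{eq:lower-tail}, use the inclusion $\{S_h\le r+1\}\subseteq\{|S_h-\lambda_h|\ge\lambda_h/2\}$ once $\lambda_h\ge2(r+1)$, bound that event by Markov's inequality with the sixth-moment bound \eqref{eq:moment-bound}, and finish with $V_h^{-1}\ll\log h$; the paper simply absorbs $h\ge h_0(6)$ into the choice of $h_1(r)$.
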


\begin{proof}
By Lemma~\ref{lem:mertens}, $\lambda_h\gg h/\log h$, so there is $h_1(r)$
such that $\lambda_h\ge2(r+1)$ and $h\ge h_0(6)$ for $h\ge h_1(r)$. Then
$\{S_h\le r+1\}\subseteq\{|S_h-\lambda_h|\ge\lambda_h/2\}$, and Markov's
inequality with~\eqref{eq:moment-bound} for $K=6$ gives
$\PP(S_h\le r+1)\le2^6C_6\lambda_h^{-3}$. By~\eqref{eq:lower-tail},
$c_{h,r}\ll V_h^{-2}\lambda_h^{-3}\ll(\log h)^5/h^3$.
\end{proof}

\subsection{Monotonicity and spacing of the arithmetic centre}

\begin{lemma}\label{lem:spacing}
Put $\mu_h:=hV_h$. For even $h\ge4$,
\[
\mu_{h+2}-\mu_h=
\begin{cases}
2V_h, & h+1\ \text{composite},\\[2pt]
\dfrac{h}{h+1}V_h, & h+1\ \text{prime}.
\end{cases}
\]
In particular $\frac45V_h\le\mu_{h+2}-\mu_h\le2V_h$, so $(\mu_h)$ is
strictly increasing along even $h\ge4$, and for even $4\le h\le h'$,
\begin{equation}\label{eq:spacing-sum}
\tfrac25\,(h'-h)\,V_{h'}\le\mu_{h'}-\mu_h\le(h'-h)\,V_h .
\end{equation}
\end{lemma}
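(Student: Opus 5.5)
The identity for $\mu_{h+2}-\mu_h$ comes from a direct computation of how $V$ changes between $h$ and $h+2$; the inequalities then follow from it by elementary estimates.

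Since $V_h=\prod_{p<h}(1-1/p)$, the primes $p<h+2$ not already counted are those with $h\le p<h+2$. For even $h\ge4$ the integer $h$ is not prime, so the only candidate is $p=h+1$.

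If $h+1$ is composite, then $V_{h+2}=V_h$ and
\[
\mu_{h+2}-\mu_h=(h+2)V_h-hV_h=2V_h .
\]

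If $h+1$ is prime, then $V_{h+2}=V_h\,\frac{h}{h+1}$, and
\[
\mu_{h+2}-\mu_h=V_h\Bigl(\frac{(h+2)h}{h+1}-h\Bigr)=V_h\,\frac{h}{h+1}.
\]

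For the bounds, note that $h/(h+1)\ge 4/5$ when $h\ge4$, and $h/(h+1)<2$ trivially. Hence
\[
\tfrac45V_h\le\mu_{h+2}-\mu_h\le 2V_h ,
\]
and strict monotonicity of $(\mu_h)$ follows because $V_h>0$.

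For \eqref{eq:spacing-sum}, write $h'-h=2k$ and telescope:
\[
\mu_{h'}-\mu_h=\sum_{j=0}^{k-1}\bigl(\mu_{h+2j+2}-\mu_{h+2j}\bigr).
\]
$V$ is nonincreasing, so every index $h+2j$ in the sum satisfies $V_{h'}\le V_{h+2j}\le V_h$.

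Applying the upper bound termwise gives
\[
\mu_{h'}-\mu_h\le\sum_{j=0}^{k-1}2V_{h+2j}\le 2kV_h=(h'-h)V_h .
\]
Applying the lower bound termwise gives
\[
\mu_{h'}-\mu_h\ge\sum_{j=0}^{k-1}\tfrac45V_{h+2j}\ge\tfrac45\,k\,V_{h'}=\tfrac25(h'-h)V_{h'} .
\]

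The only point that needs care is the observation that no prime other than possibly $h+1$ lies in $[h,h+2)$, which uses that $h$ is even and at least $4$. Everything else is routine, so I do not expect any step to be a real obstacle.
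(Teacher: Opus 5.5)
Your proposal is correct and takes essentially the same approach as the paper. The only possible new prime in $[h,h+2)$ is $h+1$, which gives the two increment formulas and the bound $h/(h+1)\ge 4/5$. Summing the $(h'-h)/2$ increments and using that $V$ is non-increasing then gives \eqref{eq:spacing-sum}.
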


\begin{proof}
The primes $p$ with $h\le p<h+2$ are among $h,h+1$, and $h$ is not prime.
If $h+1$ is composite then $V_{h+2}=V_h$; if $h+1$ is prime then
$V_{h+2}=V_h(1-\frac1{h+1})$. The formula follows, and $h/(h+1)\ge4/5$.
Summing the $(h'-h)/2$ increments and using that $V$ is non-increasing
gives~\eqref{eq:spacing-sum}.
\end{proof}

Note that $\mu_2=2>\mu_4=4/3$, which is why $h_r^*$ is only considered
for $r\ge3$.

\subsection{\texorpdfstring{Proof of Theorem~\ref{thm:main}(a)}{Proof of Theorem 1.4(a)}}

It suffices to treat $0<\eps\le1$. Fix such $\eps$ and an even integer
$K\ge\max(6,2/\eps)$; implied constants may depend on $\eps$ and $K$,
hence only on $\eps$. Let $r\ge3$, and put
\[
H:=h_r^*,\qquad W:=r^{1/2}(\log r)^{1+\eps}.
\]
It suffices to prove~\eqref{eq:main-a} for $r\ge r_0(\eps)$, a
sufficiently large constant: for $3\le r<r_0(\eps)$ both $c_r$ and $h_r^*$
are finite, and the claim holds after enlarging the implied constant.

\emph{Step 0: the size of $H$.} Since $\mu_h<3$ for even $h\le14$, we
have $H\ge16$, and by minimality $\mu_{H-2}<r\le\mu_H$. By
Lemma~\ref{lem:spacing}, $\mu_H-\mu_{H-2}\le2V_{H-2}<1$, so
$r\le\mu_H<r+1$. Lemma~\ref{lem:mertens} then gives $H/\log H\asymp r$,
hence $H\asymp r\log r$ and $\log H\asymp\log r$. We assume $r\ge r_0(\eps)$
so large that $r<H/2$, $4\le W<H/4$, $H/2\ge h_0(K)$ and $r\ge h_0(K)$.

\emph{Step 1: reduction.} Applying~\eqref{eq:singular-average} with
$Y=H-2$ gives $\sum_{2\le h<H,\ h\ \mathrm{even}}\Sigma(h)=H+O(\log H)$.
Since the series defining $c_r$ converges (Lemma~\ref{lem:convergence}),
\begin{equation}\label{eq:reduction}
c_r-H=\sum_{\substack{h\ge2\\h\ \mathrm{even}}}\Sigma(h)
\bigl(q_{h,r}-\mathbf 1_{h<H}\bigr)+O(\log H).
\end{equation}
We split the even $h\ge2$ into the ranges
\begin{gather*}
I_0:\ h\le r,\qquad
I_1:\ r<h<H/2,\qquad
I_2:\ H/2\le h\le2H,\ |h-H|>W,\\
I_3:\ |h-H|\le W,\qquad
I_4:\ h>2H .
\end{gather*}
These ranges are disjoint and cover all even $h\ge2$, since $r<H/2$ and
$W<H/2$.

\emph{Range $I_0$.} Here $\rho_h\le h-1<r$, so $q_{h,r}=1=\mathbf 1_{h<H}$,
and the contribution vanishes.

\emph{Range $I_3$.} By~\eqref{eq:singular-average}, the contribution is at
most $\sum_{|h-H|\le W}\Sigma(h)=2W+O(\log H)$.

\emph{Range $I_2$.} By Lemma~\ref{lem:mertens} there is an absolute
$\kappa_2>0$ with $V_{2H}\ge\kappa_2/\log H$. Let $\kappa_3:=\kappa_2/5$.
If $h\in I_2$ and $h>H$, then by~\eqref{eq:spacing-sum},
\[
\mu_h-r\ge\mu_h-\mu_H\ge\tfrac25(h-H)V_{2H}\ge\kappa_3\,\frac{h-H}{\log H}.
\]
If $h\in I_2$ and $h<H$, then $H-2-h\ge(H-h)/2$ (as $H-h>W\ge4$), and
\[
r-\mu_h>\mu_{H-2}-\mu_h\ge\tfrac25(H-2-h)V_{2H}\ge\kappa_3\,\frac{H-h}{\log H}.
\]
Since $\lambda_h=\mu_h+V_h$ with $0<V_h\le1$, it follows that
\[
\lambda_h-(r+1)\ge\kappa_3\frac{h-H}{\log H}-1\ge\frac{\kappa_3}2\,\frac{h-H}{\log H}
\quad(h>H),
\qquad
(r+2)-\lambda_h\ge\kappa_3\,\frac{H-h}{\log H}\quad(h<H),
\]
where in the first case we used $(h-H)/\log H>W/\log H\ge2/\kappa_3$ for
$r\ge r_0(\eps)$. By~\eqref{eq:lower-tail} (for $h>H$) or
\eqref{eq:upper-tail} (for $h<H$), Markov's inequality
and~\eqref{eq:moment-bound},
\[
\Sigma(h)\bigl|q_{h,r}-\mathbf 1_{h<H}\bigr|
\le V_h^{-2}\,C_K\lambda_h^{K/2}\Bigl(\frac{2\log H}{\kappa_3|h-H|}\Bigr)^K .
\]
For $h\in I_2$ we have $V_h^{-1}\le V_{2H}^{-1}\ll\log r$ and, by
monotonicity, $\lambda_h\le\mu_{2H}+1=2HV_{2H}+1\le2\mu_H+1<2r+3$. Hence
\[
\Sigma(h)\bigl|q_{h,r}-\mathbf 1_{h<H}\bigr|\ll
\frac{r^{K/2}(\log r)^{K+2}}{|h-H|^K},
\]
and summing over the even $h$ with $|h-H|>W$,
\[
\sum_{h\in I_2}\Sigma(h)\bigl|q_{h,r}-\mathbf 1_{h<H}\bigr|
\ll\frac{r^{K/2}(\log r)^{K+2}}{W^{K-1}}
=r^{1/2}(\log r)^{3-(K-1)\eps}\le W,
\]
because $K\eps\ge2$.

\emph{Range $I_1$.} Let $h_1'$ be the largest even integer $<H/2$; then
$h_1'\ge4$ and $h_1'\ge H/2-2$. For $h\in I_1$, monotonicity and
Lemma~\ref{lem:mertens} (note $H-2\le3h_1'$) give
\[
\mu_h\le\mu_{h_1'}=h_1'V_{h_1'}\le\frac H2\,V_{H-2}\,
\frac{V_{h_1'}}{V_{H-2}}
=\frac{H}{2(H-2)}\,\mu_{H-2}\Bigl(1+O\Bigl(\frac1{\log H}\Bigr)\Bigr)
\le\Bigl(\frac12+O\Bigl(\frac1{\log r}\Bigr)\Bigr)r .
\]
Hence $\lambda_h\le\mu_h+1\le0.6\,r$ and $(r+2)-\lambda_h\ge0.4\,r$ for
$r\ge r_0(\eps)$. As $h>r\ge h_0(K)$, \eqref{eq:upper-tail}, Markov's
inequality and~\eqref{eq:moment-bound} give
\[
\Sigma(h)(1-q_{h,r})\le V_h^{-2}C_K\lambda_h^{K/2}(0.4\,r)^{-K}
\ll(\log r)^2r^{-K/2}.
\]
Summing over fewer than $H\ll r\log r$ values of $h$, the contribution of
$I_1$ is $\ll r^{1-K/2}(\log r)^3\ll1$.

\emph{Range $I_4$.} Let $h_2'$ be the least even integer $>2H$, so
$h_2'\le2H+2\le3H$. For $h\in I_4$, monotonicity and
Lemma~\ref{lem:mertens} give
\[
\mu_h\ge\mu_{h_2'}=h_2'V_{h_2'}\ge2H\,V_H\,\frac{V_{h_2'}}{V_H}
=2\mu_H\Bigl(1+O\Bigl(\frac1{\log H}\Bigr)\Bigr)
\ge\Bigl(2-O\Bigl(\frac1{\log r}\Bigr)\Bigr)r .
\]
Hence $r+1\le0.55\,\mu_h\le0.55\,\lambda_h$ for $r\ge r_0(\eps)$, i.e.
$\lambda_h-(r+1)\ge0.45\,\lambda_h$. By~\eqref{eq:lower-tail}, Markov's
inequality and~\eqref{eq:moment-bound},
\[
\Sigma(h)\,q_{h,r}\le V_h^{-2}C_K\lambda_h^{K/2}(0.45\lambda_h)^{-K}
\ll V_h^{-2}\lambda_h^{-K/2}\ll\frac{(\log h)^{2+K/2}}{h^{K/2}},
\]
and the contribution of $I_4$ is $\ll H^{1-K/2}(\log H)^{2+K/2}\ll1$.

\emph{Conclusion.} Inserting these bounds into~\eqref{eq:reduction} gives
$c_r-H=O(\log H)+O(W)\ll_\eps W$, which is~\eqref{eq:main-a}.
\qed

\subsection{\texorpdfstring{Proof of Theorem~\ref{thm:main}(b) and (c)}{Proof of Theorem 1.4(b) and (c)}}

Write $H:=h_r^*$ and $f(x):=e^{-\gamma}x/\log x$, so that $f(H_r)=r$. As
in the previous proof we may assume $r\ge r_0$ large. By minimality of
$H$ and Lemma~\ref{lem:spacing},
\[
0\le\mu_H-r<\mu_H-\mu_{H-2}\le2V_{H-2}\ll\frac1{\log H}.
\]
By Lemma~\ref{lem:mertens}, $\mu_H=f(H)\bigl(1+O(e^{-\kappa_1\sqrt{\log H}})\bigr)$,
hence (as $1/\log H\le r\,e^{-\kappa_1\sqrt{\log H}}$ for large $r$)
\[
f(H)=r+O\bigl(r\,e^{-\kappa_1\sqrt{\log H}}\bigr).
\]
The function $f$ is increasing on $(e,\infty)$ with
$f'(x)=e^{-\gamma}(\log x-1)/(\log x)^2$. Both $H$ and $H_r$ are
$\asymp r\log r$ (for $H_r$ this follows from $H_r=e^\gamma r\log H_r$),
so $f'\asymp1/\log r$ between them, and the mean value theorem gives
\[
|H-H_r|\ll(\log r)\,r\,e^{-\kappa_1\sqrt{\log H}}\ll H_r\,e^{-\kappa_4\sqrt{\log r}},
\qquad\kappa_4:=\kappa_1/\sqrt2,
\]
since $\log H\ge\frac12\log r$ for large $r$. Combining this with
\eqref{eq:main-a} for $\eps=1$ yields
\[
c_r=H_r\bigl(1+O(e^{-\kappa_4\sqrt{\log r}})\bigr)+O\bigl(r^{1/2}(\log r)^2\bigr),
\]
and $r^{1/2}(\log r)^2/H_r\ll r^{-1/2}\log r\ll e^{-\kappa_4\sqrt{\log r}}$.
This proves~\eqref{eq:main-b} with $\kappa=\kappa_4$ for $r\ge r_0$; for
$3\le r<r_0$ it holds after enlarging the implied constant.

For~\eqref{eq:main-c}, put $\ell:=\log r$, $a:=\log\ell+\gamma$ and
$t:=H_r/(e^\gamma r)$. The defining equation gives $t=\log H_r>1$ and
\[
t=\ell+\gamma+\log t .
\]
The function $g(t):=t-\ell-\gamma-\log t$ is increasing on $(1,\infty)$,
so $t$ is its unique zero there. For $t_0:=\ell+a+a/\ell$, using
$\log(1+u)=u+O(u^2)$ with $u=a/\ell+a/\ell^2$ and $a\ge1$,
\[
g(t_0)=\frac a\ell-\log\Bigl(1+\frac a\ell+\frac a{\ell^2}\Bigr)
=O\Bigl(\frac{a^2}{\ell^2}\Bigr).
\]
Since $g'(t)=1-1/t\ge\frac12$ on $[t_0-1,t_0+1]$ and $g(t_0)\to0$, the
zero $t$ lies in this interval and $|t-t_0|\le2|g(t_0)|\ll a^2/\ell^2$.
Thus
\[
H_r=e^\gamma r\Bigl(\ell+\log\ell+\gamma+\frac{\log\ell+\gamma}{\ell}
+O\Bigl(\frac{(\log\ell)^2}{\ell^2}\Bigr)\Bigr).
\]
By~\eqref{eq:main-b}, $c_r-H_r\ll H_re^{-\kappa\sqrt\ell}\ll
r\,\ell\,e^{-\kappa\sqrt\ell}\ll r(\log\ell)^2/\ell^2$, which
gives~\eqref{eq:main-c}.
\qed

\begin{remark}\label{rem:mertens-cost}
Part (a) is more precise than (b): the passage from $h_r^*$ to $H_r$
costs only the error term in Mertens' product formula, and any
improvement of Lemma~\ref{lem:mertens} (for instance via the
Vinogradov--Korobov zero-free region) improves~\eqref{eq:main-b}
accordingly.
\end{remark}

\section{Prime gaps with fewer than \texorpdfstring{$r$}{r} rough numbers}\label{sec:gaps}

Throughout this section $r\ge1$ is fixed and $X$ is large. For $X\ge3$
every $g_n$ with $p_n\ge X$ is even.

\subsection{Long gaps}

\begin{lemma}\label{lem:long}
Let $0<\eta\le1$ and $B>0$ be fixed. Then
\[
\#\{n:\ X\le p_n\le2X,\ g_n\ge(\log X)^\eta,\ M_n<r\}
\ll_{r,\eta,B}\frac X{(\log X)^B}.
\]
\end{lemma}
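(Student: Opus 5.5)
The plan is to follow the long-gap argument of Gafni and Tao, replacing the condition ``no rough number'' by ``fewer than $r$ rough numbers''. Fix $n$ counted on the left, and put $h:=g_n\ge(\log X)^\eta$. Since $p_n\le 2X$ and $g_n\ll X$, we have $h\le 3X$ for large $X$. Consider the interval $(p_n,p_{n+1})$ of length $h-1$. It contains at most $r-1$ integers coprime to $P_h$. So $S_h(p_n)\le r+1$, where $S_h$ is as in Lemma~\ref{lem:MV}, since the endpoints are also coprime to $P_h$. Meanwhile $\lambda_h=(h+1)V_h\gg h/\log h$, which tends to infinity. For a \emph{fixed} modulus this is a large deviation. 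The difficulty is that $P_h$ is astronomically larger than $X$, so we cannot simply count residues $b\bmod P_h$.

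To get around this, we replace $P_h$ by a smaller modulus. Choose $z:=(\log X)^{\eta/2}$, say, or more precisely a $z$ with $z\le h$, and set $q:=P_z=\prod_{p<z}p$, which is $X^{o(1)}$. Any $m\in(p_n,p_{n+1})$ with $(m,P_h)=1$ is coprime to $q$, but not conversely. Instead we count integers coprime to $q$ but having a prime factor in $[z,h)$.

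Cover the gap by the window $[p_n,p_n+L]$ with $L:=\lfloor(\log X)^\eta\rfloor\le h$. Then $T(p_n):=\#\{0\le i\le L:(p_n+i,q)=1\}$ exceeds $r+1$ plus the number $E(p_n)$ of $i\le L$ with $(p_n+i,q)=1$ and $p\mid p_n+i$ for some prime $z\le p<h$. So each counted $n$ satisfies either $T(p_n)\le 2r+2$ or $E(p_n)\ge T(p_n)-r-1\ge T(p_n)/2$.

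Both possibilities are controlled by counting all integers $a\in[X,2X]$, not just primes, which is legitimate as an upper bound. For the event $T(a)\le 2r+2$, the expected value $LV_z\asymp L/\log z$ is $\gg(\log X)^{\eta}/\log\log X$. Lemma~\ref{lem:MV} with modulus $q$, length $L$ and a large even $K=K(B,\eta)$ applies, since $q/\varphi(q)\ll\log z$ and $L^{-1/(7K)}(\log z)^{2^K+K/2}=o(1)$. Because $q\le X^{1/2}$, the count over $a\in[X,2X]$ is $(X/q+1)\,m_K(q;L)$. Markov's inequality then gives $\ll X(LV_z)^{-K/2}\ll X(\log X)^{-B}$ once $K\eta/3>B$.

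For the event $E(a)\ge T(a)/2$, we need $E(a)\gg LV_z$ for $a$ outside the previous exceptional set. This is the \emph{main obstacle}, because primes between $z$ and $h$ can genuinely remove many coprime residues. I would handle it with a first-moment bound. Write $E(a)\le\sum_{z\le p<h}\#\{i\le L:p\mid a+i,\ (a+i,q)=1\}$. Averaging over $a$, the $p$-term contributes about $(L/p+1)V_z$. Bounding $\log(\log h/\log z)$ by a constant is not possible, though, since $h$ may be as large as $X$.

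So I would modify the choice and take $z$ comparable to $h$ in logarithmic scale. Concretely, use $q=P_h$ itself when $h\le\frac12\log X$, so that $P_h\le X^{1/2+o(1)}$ and Lemma~\ref{lem:MV} applies directly to $S_h$, counting over $a\in[X,2X]$ as above. When $h>\frac12\log X$, fall back on Cramér-type bounds: the number of $n\le 2X$ with $g_n\ge h$ is small. Equivalently, restrict to sub-windows of length $L$ inside the gap, where all $L$ integers are composite. Then Lemma~\ref{lem:MV} with modulus $P_{L}$ and the observation that the $\gg LV_L$ integers coprime to $P_L$ are still mostly rough forces a large deviation of $S_L$, giving again $\ll X(\log X)^{-B}$.
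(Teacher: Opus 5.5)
The range $(\log X)^\eta\le g_n\le\frac12\log X$ is handled correctly, but the range $g_n>\frac12\log X$ is not, and that is a genuine gap. It is the entire lemma when $\eta=1$.

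In the range you do handle, $S_h(p_n)=M_n+2\le r+1$. Since $P_h\le X^{1/2+o(1)}$, each class modulo $P_h$ contains $\ll X/P_h$ integers of $[X,2X]$. So Markov's inequality with \eqref{eq:moment-bound} gives $\ll X\lambda_h^{-K/2}$ for each $h$, and summing over the $O(\log X)$ values of $h$ is harmless for large $K$.

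For $g_n>\frac12\log X$, no Cram\'er-type bound of the required strength is available. Unconditionally one only has $\#\{n:\ X\le p_n\le 2X,\ g_n\ge h\}\le X/h+1$. This is $\ll X(\log X)^{-B}$ only once $h\ge(\log X)^B$, and heuristically there are $\asymp X/\log X$ gaps exceeding $\frac12\log X$. So for $\frac12\log X<g_n<(\log X)^B$ you must use $M_n<r$, and your sub-window step does not use it validly. An integer coprime to $P_L$ with $L\le\frac12\log X$ is rough for the gap only if it has no prime factor in $[L,g_n)$. On average only a proportion $V_{g_n}/V_L\approx\log L/\log g_n$ of such integers qualify, about $1/B$ when $g_n\approx(\log X)^B$. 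The claim that they are ``still mostly rough'' is exactly your earlier obstacle $E$, now asserted rather than resolved. A first-moment bound for $E$ can only give a bounded probability, not a saving of $(\log X)^{-B}$.

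The missing ingredient is a $K$-th moment bound, averaged over real $x\in[X,2X]$, for the number of $z$-rough integers in $(x,x+L]$ with $z$ far beyond $\log X$. There $P_z>X$, so counting residue classes modulo $P_z$ inside $[X,2X]$ no longer works. The paper imports this from Lemma~3.1 of Gafni--Tao, with $L=\lfloor(\log X)^\alpha\rfloor$, $z=\exp((\log X)^\beta)$, $\alpha<\eta$ and $0<\beta<\alpha/2$.

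The paper also places the sieve threshold \emph{above} the gap length rather than below it. Gaps with $g_n\ge z$ are disjoint intervals of length $\ge z$, so there are at most $X/z+1\ll X(\log X)^{-B}$ of them. For $g_n<z$, every $z$-rough integer in the gap is automatically rough for it. Hence $T(x)\le M_n<r$ on all of $[p_n,p_{n+1}-L)$, and no term like your $E$ ever arises. The set $\{x:\ T(x)<r\}$ has measure $\ll X(\log X)^{-K(\alpha/2-\beta)+o(1)}$. The intervals $[p_n,p_{n+1}-L)$ are disjoint, each of length $\ge L$, which gives the count. With that input your separate treatment of small $h$ becomes unnecessary.
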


\begin{proof}
Fix $\alpha,\beta$ with $0<\alpha<\eta$ and $0<\beta<\alpha/2$, and a
fixed even integer $K$ with $K(\alpha/2-\beta)+\alpha>B$. Put
\[
L:=\lfloor(\log X)^\alpha\rfloor,\qquad z:=\exp((\log X)^\beta),\qquad
R':=L\prod_{p<z}\Bigl(1-\frac1p\Bigr),
\]
and $T(x):=\#\{x<m\le x+L:\ \lpf(m)\ge z\}$ for real $x$. Since
$0<\beta<\alpha<1$, \cite[Lemma~3.1]{GT} (with $H=L$) gives, for even $K$,
\begin{equation}\label{eq:GT-moment}
\int_X^{2X}|T(x)-R'|^K\,dx\ll_K X\,L^{K/2+o(1)} .
\end{equation}
By Mertens' theorem $R'\asymp(\log X)^{\alpha-\beta}$, so $R'\ge2r$ for
large $X$, and the set $\mathcal E:=\{x\in[X,2X]:\ T(x)<r\}$ satisfies,
by~\eqref{eq:GT-moment},
\[
|\mathcal E|\le\Bigl(\frac{R'}2\Bigr)^{-K}\int_X^{2X}|T(x)-R'|^K\,dx
\ll X(\log X)^{-K(\alpha/2-\beta)+o(1)} .
\]
Let $n$ be counted in the lemma. At most one such $n$ has $p_{n+1}>2X$.
If $p_{n+1}\le2X$ and $g_n\ge z$, then $(p_n,p_{n+1})$ is one of at most
$X/z$ disjoint subintervals of $[X,2X]$ of length $\ge z$, and
$X/z\ll X(\log X)^{-B}$. Finally suppose $p_{n+1}\le2X$ and
$(\log X)^\eta\le g_n<z$. For every real $x\in[p_n,p_{n+1}-L)$ all integers
$m\in(x,x+L]$ satisfy $p_n<m<p_{n+1}$, and every such $m$ with
$\lpf(m)\ge z$ has $\lpf(m)>g_n$, i.e.\ is rough for the gap. Hence
$T(x)\le M_n<r$, so $[p_n,p_{n+1}-L)\subseteq\mathcal E$. These intervals
are pairwise disjoint and have length $g_n-L\ge L$ (as
$(\log X)^\eta\ge2L$ for large $X$). Therefore the number of such $n$ is
at most $|\mathcal E|/L\ll X(\log X)^{-K(\alpha/2-\beta)-\alpha+o(1)}
\ll X(\log X)^{-B}$.
\end{proof}

\subsection{Short gaps}

\begin{lemma}\label{lem:short}
There is $h_2(r)$ such that for all sufficiently large $X$ and all even
$h$ with $h_2(r)\le h\le(\log X)^{1/100}$,
\[
N_{h,r}(X)\ll\frac{X}{h^2\log^2X},
\]
with an absolute implied constant.
\end{lemma}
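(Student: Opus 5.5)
By Lemma~\ref{lem:interpretation}, every $n$ counted by $N_{h,r}(X)$ has $p_n\bmod P_h\in A_h$ and $\rho_h(p_n)<r$. The plan is to count pairs of primes $(p,p+h)$ with $p\in[X,2X]$ in the residue classes $b\in A_h$ with $\rho_h(b)<r$, and then apply a local tail bound.

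\textbf{Step 1 (upper-bound sieve).} Drop the condition that $p_n,p_n+h$ are consecutive primes, and keep only that both are prime with $p_n\equiv b\pmod{P_h}$. For $h\le(\log X)^{1/100}$ we have $P_h\le e^{O(h)}\le X^{o(1)}$. A Selberg (or Brun) upper-bound sieve for prime pairs in an arithmetic progression, sifting the two linear forms $P_hm+b$ and $P_hm+b+h$ by primes $p\nmid P_h$ up to $X^{\delta}$, then gives
\[
\#\{p\le 2X:\ p\equiv b\ (P_h),\ p+h\ \text{prime}\}
\ll\frac{X}{P_h\log^2X}\prod_{p\ge h}\Bigl(1-\frac{\nu_p}{p}\Bigr)\Bigl(1-\frac1p\Bigr)^{-2}\Bigl(\frac{p}{p-1}\Bigr)^{2}\cdots,
\]
uniformly in $b\in A_h$. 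Keeping track of the local factors, the total over $b$ is
\[
N_{h,r}(X)\ll \frac{X}{\log^2X}\,\Sigma(h)\,q_{h,r}=\frac{X}{\log^2X}\,c_{h,r},
\]
with an absolute implied constant, since the sieve constant depends on neither $h$ nor $b$. Equivalently, writing the sum over $b$ in terms of $\PP(A_h\cap\{\rho_h<r\})$, one gets $N_{h,r}(X)\ll X(\log X)^{-2}V_h^{-2}\PP(S_h\le r+1)$, where the $p<h$ factors produce exactly $V_h^{-2}$, as in~\eqref{eq:ratio}.

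\textbf{Step 2 (local tail).} Apply \eqref{eq:lower-tail} together with the moment bound \eqref{eq:moment-bound} for $K=6$, exactly as in the proof of Lemma~\ref{lem:convergence}. Choose $h_2(r)$ so that $\lambda_h\ge 2(r+1)$ and $h\ge h_0(6)$; this gives $c_{h,r}\ll(\log h)^5/h^3\ll h^{-2}$ with an absolute constant. Combining this with Step~1 proves the lemma.

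\textbf{Main obstacle.} The difficulty is the uniformity of the sieve bound in Step~1. The modulus $P_h$ grows with $h$, so we need the upper-bound sieve constant to be absolute and the error terms to be admissible for moduli up to $X^{o(1)}$. The plan is to apply the sieve to the sequence $m\mapsto (P_hm+b)(P_hm+b+h)$ with $m\le 2X/P_h$ and sifting level $(X/P_h)^{1/10}$. Since $X/P_h\ge X^{1/2}$, the level of distribution is trivial, and the main term factors into the $p\ge h$ part of the singular series times $(\log X)^{-2}$ times $P_h^{-1}(P_h/\varphi(P_h))^2$. Summing over the admissible $b$ then reproduces $V_h^{-2}\PP(\cdot)$. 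If the bookkeeping of the $p<h$ factors becomes awkward, a fallback is to bound crudely by $\Sigma(h)\le V_h^{-2}\ll(\log h)^2$ per unit of $\PP(A_h\cap\{\rho_h<r\})/\PP(A_h)$. The $h^{-3}$ saving in the tail easily absorbs this loss.
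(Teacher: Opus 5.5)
Your proof is correct, but it is organized differently from the paper's.

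\textbf{The paper's route.} The paper does not work modulo $P_h$. It takes $L=h-1$ and $Q=\prod_{p<L^{10}}p$, and uses only the one-sided inequality $U(b)\le M_n<r$ for $b\equiv p_n\pmod Q$. It then applies \eqref{eq:MV} afresh with modulus $Q$ and $K=6$ to get $\#\{b\bmod Q:\ U(b)<r\}\ll Q\,R'^{-3}$. Finally it quotes the per-class sieve bound $\ll L^{o(1)}X/(Q\log^2X)$ from \cite[proof of Lemma~4.1]{GT}, in exactly the set-up used there.

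\textbf{Your route.} You use the exact identity $M_n=\rho_h(p_n)$ of Lemma~\ref{lem:interpretation}, so the sieve is applied with modulus $P_h$. For every $b\in A_h$ one has $\fS_{P_h}(b,h)=\Sigma(h)P_h/|A_h|$, as computed in the proof of Proposition~\ref{prop:fixed-h}. Summing over $B_{h,r}$ therefore gives the clean intermediate bound $N_{h,r}(X)\ll c_{h,r}X/\log^2X$, uniformly for $h\le(\log X)^{1/100}$. Lemma~\ref{lem:convergence} then finishes the proof with $h_2(r)=h_1(r)$.

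\textbf{Comparison.} Your approach gives a shorter proof that reuses the local tail estimate already established. It also shows that the unconditional upper bound matches the conditionally predicted constant $c_{h,r}$ up to an absolute factor. The cost is that you must invoke an upper-bound sieve for prime pairs in progressions in a specific form: with the explicit local-density main term, and uniform in the modulus $P_h=X^{o(1)}$ and in $b$ (e.g.\ the Selberg sieve, or Halberstam--Richert's bound for linear forms). The paper instead borrows the estimate of \cite{GT} verbatim.

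\textbf{Minor issues.} Your first display is garbled. The factor $(p/(p-1))^2$ duplicates $(1-1/p)^{-2}$, and the factors with $p<h$, which contribute $V_h^{-2}$, are missing. However, the bound you actually use is the right one, namely $\ll\fS_{P_h}(b,h)\,X/(P_h\log^2X)$ per class, as your ``main obstacle'' paragraph correctly describes. The proposed fallback is unnecessary, since $\fS_{P_h}(b,h)$ does not depend on $b\in A_h$.
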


\begin{proof}
Put $L:=h-1$, $z:=L^{10}$, $Q:=\prod_{p<z}p$,
\[
U(b):=\#\{1\le i\le L:\ (b+i,Q)=1\},\qquad R':=L\,\frac{\varphi(Q)}Q .
\]
As $h<z$, $(m,Q)=1$ implies $\lpf(m)\ge z>h$. Let $n$ be counted by
$N_{h,r}(X)$ and $b\equiv p_n\pmod Q$. Each $i$ counted by $U(b)$ gives a
rough integer $p_n+i$ of the gap, so $U(b)\le M_n<r$; moreover
$(b(b+h),Q)=1$, since $p_n$ and $p_n+h$ are primes exceeding~$z$.

By~\eqref{eq:MV} with $q=Q$ and $K=6$, and since
$Q/\varphi(Q)=\prod_{p<z}(1-1/p)^{-1}\ll\log L$,
\[
\frac1Q\sum_{b\bmod Q}\bigl(U(b)-R'\bigr)^6
\ll R'^3\bigl(1+L^{-1/42}(\log L)^{67}\bigr)\ll R'^3 .
\]
Since $R'\asymp L/\log L$, we may choose $h_2(r)$ so that $R'\ge2r$; then
$U(b)<r$ implies $|U(b)-R'|\ge R'/2$, and
\[
\#\{b\bmod Q:\ U(b)<r\}\ll Q\,R'^{-3} .
\]
For each $b\bmod Q$ with $(b(b+h),Q)=1$, the upper-bound sieve estimate
used in \cite[proof of Lemma~4.1]{GT} gives
\[
\#\{X\le p\le2X:\ p\equiv b\ (\mathrm{mod}\ Q),\ p+h\ \text{prime}\}
\ll L^{o(1)}\,\frac{X}{Q\log^2X}\qquad(L\to\infty),
\]
uniformly in such $b$ and in $h\le(\log X)^{1/100}$; here
$\log Q\ll z\le(\log X)^{1/10}$, so $Q=X^{o(1)}$. Summing over the
admissible classes with $U(b)<r$,
\[
N_{h,r}(X)\ll Q\,R'^{-3}\,L^{o(1)}\,\frac{X}{Q\log^2X}
=L^{-3+o(1)}\,\frac{X}{\log^2X}\ll\frac{X}{h^2\log^2X},
\]
after enlarging $h_2(r)$ if necessary.
\end{proof}

\subsection{\texorpdfstring{Proof of Theorem~\ref{thm:gaps}(a)}{Proof of Theorem 1.5(a)}}

Let $n$ be counted by $N_r(X)$ and $h:=g_n$. If $h<h_2(r)$, we use
$N_{h,r}(X)\le\#\{n:\ X\le p_n\le2X,\ g_n=h\}\ll\Sigma(h)X/\log^2X$
(\cite[(2.14)]{GT}); there are $O_r(1)$ such~$h$. The even
$h\in[h_2(r),(\log X)^{1/100}]$ contribute
$\ll\sum_hX/(h^2\log^2X)\ll X/\log^2X$ by Lemma~\ref{lem:short}, and the
gaps with $g_n\ge(\log X)^{1/100}$ contribute $\ll_rX/\log^3X$ by
Lemma~\ref{lem:long} with $\eta=1/100$ and $B=3$. Hence
$N_r(X)\ll_rX/\log^2X$.
\qed

\subsection{The conditional asymptotic}

We use the following special case of the Dickson--Hardy--Littlewood
prime tuples conjecture~\cite{Di04,HL}.

\begin{hypothesis}\label{hyp:DHL}
Let $W\ge1$ and let $b$ and $h\ne0$ be integers such that $b$ and $b+h$
are coprime to $W$, all fixed. Then, as $X\to\infty$,
\[
\#\{X\le p\le2X:\ p\equiv b\ (\mathrm{mod}\ W),\ p+h\ \text{prime}\}
=\bigl(\fS_W(b,h)+o(1)\bigr)\frac{X}{W\log^2X},
\]
where
\[
\fS_W(b,h):=\prod_{p\mid W}\Bigl(1-\frac1p\Bigr)^{-2}
\prod_{p\nmid W}\Bigl(1-\frac1p\Bigr)^{-2}\Bigl(1-\frac{\nu_p(\{0,h\})}p\Bigr).
\]
\end{hypothesis}

This is \cite[Conjecture~4.1]{GT} (with $b_1=b$, $b_2=b+h$) in the
normalisation in which it is applied in \cite[Section~4]{GT}: the progression
contains $\sim X/W$ integers of $[X,2X]$, and for $p\nmid W$ the classes
$b,b+h$ occupy $\nu_p(\{0,h\})$ residues modulo~$p$.

\begin{proposition}\label{prop:fixed-h}
Assume Hypothesis~\ref{hyp:DHL}. Then for every fixed even $h\ge2$,
\[
N_{h,r}(X)=\bigl(c_{h,r}+o(1)\bigr)\frac{X}{\log^2X}\qquad(X\to\infty).
\]
\end{proposition}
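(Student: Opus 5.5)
The plan is to decompose $N_{h,r}(X)$ by the residue of $p_n$ modulo $P_h$ and then apply Hypothesis~\ref{hyp:DHL} with $W=P_h$. Since $h$ is fixed and $X$ is large, we have $p_n>h$. By Lemma~\ref{lem:interpretation}, $n$ is counted by $N_{h,r}(X)$ exactly when $p_n\equiv b\pmod{P_h}$ for some $b\in A_h$ with $\rho_h(b)<r$, $p_n+h$ is prime, and no prime lies in $(p_n,p_n+h)$. We write
\[
N_{h,r}(X)=\sum_{\substack{b\in A_h\\ \rho_h(b)<r}}\Bigl(\pi_2(X;b)-E(X;b)\Bigr),
\]
where $\pi_2(X;b)$ counts primes $p\in[X,2X]$ with $p\equiv b\pmod{P_h}$ and $p+h$ prime. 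The term $E(X;b)$ counts those among them for which some $p+i$, $1\le i\le h-1$, is also prime.

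\emph{Main term.} Hypothesis~\ref{hyp:DHL} with $W=P_h$ gives $\pi_2(X;b)=(\fS_{P_h}(b,h)+o(1))X/(P_h\log^2X)$. For $p<h$ the local factor is $(1-1/p)^{-2}$. For $p\ge h$ we have $p\nmid h$, so the factors agree with those of $\Sigma(h)$; this uses $h\ge4$ even, and $h=2$ is handled separately since $P_2=1$. Hence
\[
\frac{\fS_{P_h}(b,h)}{P_h}=\frac{\Sigma(h)}{P_h\prod_{p<h}(1-\nu_p(\{0,h\})/p)}=\frac{\Sigma(h)}{|A_h|},
\]
and this is independent of $b$. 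This is essentially the identity~\eqref{eq:ratio} read backwards. Summing over the admissible $b$ gives $\Sigma(h)\,\#\{b\in A_h:\rho_h(b)<r\}/|A_h|=\Sigma(h)q_{h,r}=c_{h,r}$. For $h=2$ there is no intermediate integer to be prime, $\rho_2\equiv1$, and the count is $\fS_2 X/\log^2X$ or $0$ according as $r\ge2$ or $r=1$, matching $c_{2,r}$.

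\emph{Error term (the main obstacle).} It remains to show $E(X;b)=o(X/\log^2X)$. If $p,p+i,p+h$ are all prime with $0<i<h$, then $p$ is counted by a prime triple count for $\{0,i,h\}$. For each of the finitely many $i$, a standard upper-bound sieve (Selberg or Brun) gives
\[
\#\{p\le 2X:\ p,\ p+i,\ p+h\ \text{prime}\}\ll_h X/\log^3X.
\]
This holds whether or not $\{0,i,h\}$ is admissible, and in the inadmissible case the count is $O(1)$. This is the ``removal of prime triples'' mentioned after Theorem~\ref{thm:gaps}, and it needs no conjectural input. Summing over $i<h$ and over the $O_h(1)$ classes $b$ gives $\sum_b E(X;b)\ll_h X/\log^3X$.

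Combining the two parts yields $N_{h,r}(X)=(c_{h,r}+o(1))X/\log^2X$. The only delicate point is making sure the local factor comparison is exact: the primes $p\ge h$, including $p=h+1$ if it is prime, all satisfy $\nu_p=2$, and this is precisely what allows the identification with $\Sigma(h)/|A_h|$ from Remark~\ref{rem:GT}.
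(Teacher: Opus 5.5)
Your proposal is correct and follows essentially the same route as the paper. You split by $p_n \bmod P_h$, apply Hypothesis~\ref{hyp:DHL} with $W=P_h$ to get $\fS_{P_h}(b,h)/P_h=\Sigma(h)/|A_h|$, and remove the gaps containing an intermediate prime using an upper-bound sieve for prime triples $\{0,i,h\}$. One minor point: for $p\nmid P_h$ the local factors of $\fS_{P_h}(b,h)$ coincide with those of $\Sigma(h)$ by definition, whether $\nu_p=1$ or $\nu_p=2$, so $h=2$ needs no separate case; your separate treatment of $h=2$ is nevertheless correct.
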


\begin{proof}
Let $B_{h,r}:=\{b\in A_h:\ \rho_h(b)<r\}$, so $|B_{h,r}|=q_{h,r}|A_h|$.
For $b\in A_h$ and $W=P_h$, the factors of $\fS_{P_h}(b,h)$ with $p<h$
are $(1-1/p)^{-2}$, and those with $p\ge h$ coincide with the
corresponding factors of $\Sigma(h)$. Since
$|A_h|=P_h\prod_{p<h}(1-\nu_p(\{0,h\})/p)$, this gives
$\fS_{P_h}(b,h)=\Sigma(h)P_h/|A_h|$. Hence Hypothesis~\ref{hyp:DHL}
yields, for each $b\in A_h$,
\[
\#\{X\le p\le2X:\ p\equiv b\ (\mathrm{mod}\ P_h),\ p+h\ \text{prime}\}
=\Bigl(\frac{\Sigma(h)}{|A_h|}+o(1)\Bigr)\frac X{\log^2X},
\]
and summing over $b\in B_{h,r}$,
\[
\mathcal A_{h,r}(X):=\#\{X\le p\le2X:\ p+h\ \text{prime},\
p\bmod P_h\in B_{h,r}\}=\bigl(c_{h,r}+o(1)\bigr)\frac X{\log^2X}.
\]
Let $X>h$. By Lemma~\ref{lem:interpretation}, $n\mapsto p_n$ maps the
set counted by $N_{h,r}(X)$ injectively into the set counted by
$\mathcal A_{h,r}(X)$. Conversely, if $p$ is counted by
$\mathcal A_{h,r}(X)$ and $p+h$ is the prime following $p$, then $p=p_n$
with $g_n=h$ and $M_n=\rho_h(p)<r$ (Lemma~\ref{lem:interpretation}).
Hence
\[
0\le\mathcal A_{h,r}(X)-N_{h,r}(X)\le\sum_{i=1}^{h-1}
\#\{X\le p\le2X:\ p,\ p+i,\ p+h\ \text{all prime}\}.
\]
For fixed $h$ and $i$ each term is $O_h(X/\log^3X)$ by an upper-bound
sieve of dimension three (see e.g.~\cite{FI}), and it is $O_h(1)$ if
$\{0,i,h\}$ is not admissible. Thus
$N_{h,r}(X)=\mathcal A_{h,r}(X)+O_h(X/\log^3X)$, which proves the claim.
\end{proof}

For $r=1$ one has $\rho_h(b)=0$ for $b\in B_{h,1}$, so no prime can lie
between $p$ and $p+h$ and $\mathcal A_{h,1}(X)=N_{h,1}(X)$ for $X>h$; in
this case Proposition~\ref{prop:fixed-h} is \cite[(4.2)]{GT}.

\subsection{\texorpdfstring{Proof of Theorem~\ref{thm:gaps}(b)}{Proof of Theorem 1.5(b)}}

Fix an even $Y\ge h_2(r)$. For large $X$, Lemmas~\ref{lem:long}
(with $\eta=1/100$, $B=3$) and~\ref{lem:short} give
\[
N_r(X)=\sum_{\substack{h\le Y\\h\ \mathrm{even}}}N_{h,r}(X)
+O\Bigl(\frac{X}{Y\log^2X}\Bigr)+O_r\Bigl(\frac X{\log^3X}\Bigr),
\]
where the first implied constant is absolute. By
Proposition~\ref{prop:fixed-h},
\[
\limsup_{X\to\infty}\Bigl|\frac{N_r(X)\log^2X}{X}
-\sum_{\substack{h\le Y\\h\ \mathrm{even}}}c_{h,r}\Bigr|\ll\frac1Y .
\]
Letting $Y\to\infty$ and using Lemma~\ref{lem:convergence}, we obtain
$N_r(X)\log^2X/X\to c_r$. Since $c_r\ge\fS_2>0$, this is
$N_r(X)\sim c_rX/\log^2X$.
\qed

\section{Numerical illustration}\label{sec:numerics}

The computations in this section concern only the local model; they are
not used in any of the proofs above.

\subsection{The case \texorpdfstring{$r=1$}{r=1}}
By the Chinese remainder theorem, $A_h$ corresponds to the set of tuples
$(b_p)_{p<h}$ with $b_p\not\equiv0,-h\pmod p$, and $\rho_h(b)$ is the
number of $1\le i\le h-1$ with $b_p+i\not\equiv0\pmod p$ for all $p<h$.
Enumerating all $|A_h|$ tuples, we computed $|\Omega_h|$ exactly for even
$h\le36$; see Table~\ref{tab:r1}, where $c_{h,1}$ is obtained
from~\eqref{eq:Sigma-over-A}. For $h\le18$ our values of $|\Omega_h|$ agree with
\cite[Table~1]{GT}.\footnote{Two entries of \cite[Table~1]{GT} appear to
be misprints. For $h=12$ the list given there contains
$2310\equiv0\pmod{2310}$, whereas
$\Omega_{12}=\{1,199,467,509,1789,1831,2099,2297\}$ has $8$ elements, in
agreement with the value $c_{12}=0.0782$ given there. For $h=14$ the
entry $0.0229$ is inconsistent with the cumulative column
($2.6824-2.6308=0.0516$), which agrees with our value.} Since all terms
of $c_1$ are nonnegative and $c_1=c$ (Remark~\ref{rem:GT}),
\[
c\ \ge\ \sum_{\substack{h\le36\\ h\ \mathrm{even}}}c_{h,1}
=\fS_2\cdot\frac{12928101827}{6226553025}=2.7413688\ldots .
\]
A Monte Carlo estimate of the terms with $38\le h\le900$ (independent
uniform residues $b_p$; $10^6$ samples for each $h\le100$ and
$2\cdot10^4$ samples for $100<h\le900$) adds about $(2.1\pm0.2)\cdot10^{-4}$
(one standard deviation). In these samples $\rho_h=0$ occurred only for
$h\le68$, and it did not occur in the supplementary samples for
$902\le h\le2000$ described in Section~\ref{subsec:larger-r}. This suggests $c\approx2.7416$; this value
is not rigorous.

\begin{table}[ht]
\centering
\small
\begin{tabular}{rrrrr}
\toprule
$h$ & $|A_h|$ & $|\Omega_h|$ & $c_{h,1}$ & $\sum_{h'\le h}c_{h',1}$\\
\midrule
4 & 1 & 1 & 1.320324 & 1.320324\\
6 & 6 & 2 & 0.880216 & 2.200539\\
8 & 15 & 2 & 0.176043 & 2.376583\\
10 & 20 & 2 & 0.176043 & 2.552626\\
12 & 270 & 8 & 0.078241 & 2.630867\\
14 & 1782 & 58 & 0.051568 & 2.682435\\
16 & 1485 & 12 & 0.010669 & 2.693105\\
18 & 44550 & 376 & 0.022287 & 2.715392\\
20 & 504900 & 1440 & 0.005021 & 2.720412\\
22 & 420750 & 2622 & 0.009142 & 2.729554\\
24 & 15904350 & 48868 & 0.008114 & 2.737668\\
26 & 8675100 & 7682 & 0.001275 & 2.738944\\
28 & 9542610 & 5664 & 0.000940 & 2.739884\\
30 & 572556600 & 154220 & 0.000948 & 2.740832\\
32 & 6226553025 & 833688 & 0.000177 & 2.741009\\
34 & 6641656560 & 1075458 & 0.000228 & 2.741237\\
36 & 12453106050 & 620632 & 0.000132 & 2.741369\\
\bottomrule
\end{tabular}
\caption{Exact values of $|\Omega_h|$ and $c_{h,1}$ (rounded to six
decimals); $c_{2,1}=0$.}
\label{tab:r1}
\end{table}

\subsection{Larger \texorpdfstring{$r$}{r}}\label{subsec:larger-r}
Put
\[
c_r^{(\le900)}:=\sum_{\substack{2\le h\le900\\ h\ \mathrm{even}}}c_{h,r};
\qquad\text{thus}\qquad c_r^{(\le900)}\le c_r ,
\]
since every $c_{h,r}$ is nonnegative. Table~\ref{tab:rgrow} lists Monte
Carlo estimates $\widehat c_r^{(\le900)}$ of the truncated sums
$c_r^{(\le900)}$ and compares them with $h_r^*$ and $H_r$. Here the exact
distributions of $\rho_h$ were used for $h\le36$ and the Monte Carlo
samples described above for $38\le h\le900$; the column $\sigma$ is one
standard deviation of the Monte Carlo error. The terms with $h>900$ are
not included, and $\sigma$ does not account for them. To gauge their
size we drew $10^5$ further samples of $\rho_h$ for each even $h$ with
$902\le h\le2000$. The smallest value observed was $\rho_h=50$, so these
$h$ contribute nothing to the estimates for $r\le50$ (for comparison, the
Monte Carlo estimates of $\sum_{902\le h\le2000}c_{h,r}$ from the same
samples are about $1.5\cdot10^{-3}$ for $r=55$ and $6\cdot10^{-2}$ for
$r=60$). We therefore expect $c_r-c_r^{(\le900)}$ to be negligible at the
precision of Table~\ref{tab:rgrow} for $r\le50$, but we have not proved
this.

\begin{table}[ht]
\centering
\small
\begin{tabular}{rrrrrr}
\toprule
$r$ & $\widehat c_r^{(\le900)}$ & $\sigma$ & $h_r^*$ & $H_r$ &
$\widehat c_r^{(\le900)}-h_r^*$\\
\midrule
3 & 17.61 & $<0.01$ & 16 & 14.16 & $+1.61$\\
4 & 25.51 & $<0.01$ & 26 & 22.03 & $-0.49$\\
5 & 33.88 & $<0.01$ & 34 & 30.41 & $-0.12$\\
6 & 42.78 & $<0.01$ & 42 & 39.21 & $+0.78$\\
8 & 61.62 & 0.01 & 60 & 57.81 & $+1.62$\\
10 & 81.59 & 0.02 & 82 & 77.48 & $-0.41$\\
12 & 102.29 & 0.03 & 100 & 97.99 & $+2.29$\\
15 & 134.35 & 0.04 & 134 & 130.05 & $+0.35$\\
20 & 190.76 & 0.04 & 190 & 186.18 & $+0.76$\\
25 & 249.95 & 0.05 & 250 & 244.94 & $-0.05$\\
30 & 310.85 & 0.05 & 310 & 305.79 & $+0.85$\\
40 & 437.71 & 0.06 & 436 & 432.40 & $+1.71$\\
50 & 569.50 & 0.06 & 568 & 564.19 & $+1.50$\\
\bottomrule
\end{tabular}
\caption{Truncated Monte Carlo estimates $\widehat c_r^{(\le900)}$ of
$c_r^{(\le900)}=\sum_{h\le900}c_{h,r}$ (a lower bound for $c_r$), compared
with $h_r^*$ and $H_r$.}
\label{tab:rgrow}
\end{table}

For every $3\le r\le50$ (not only for the values listed) the truncated
estimates satisfy ${|\widehat c_r^{(\le900)}-h_r^*|<3}$, with $\sigma<0.06$;
the extreme values are $-1.66$ at $r=21$ and $+2.75$ at $r=48$. These
deviations show no tendency to grow, whereas $r^{1/2}\log r$ increases
from about $1.9$ to about $27.7$ over this range; this suggests that
Theorem~\ref{thm:main}(a) is far from optimal. Part of the fluctuation
reflects the fact that $h_r^*$ runs through even integers only. A sharper estimate would require a local limit theorem for
$\rho_h$ under $\PP_{A_h}$, uniformly in $h$ near $h_r^*$, which we do not
pursue here.

\FloatBarrier

\end{document}